\documentclass{amsart}
\usepackage{amssymb}
\usepackage[nobysame]{amsrefs}
\usepackage{todonotes}
\usepackage{mathpazo}
\usepackage{enumerate}
\usepackage{bm}
\usepackage{tikz,pgf,pgfmath}
	\pgfdeclarelayer{background}
	\pgfdeclarelayer{middle}
	\pgfsetlayers{background,middle,main}
\usepackage[colorlinks=true,
	urlcolor=blue!20!black,
	citecolor=green!50!black]{hyperref}
\definecolor{c1}{HTML}{FF9999}
\definecolor{c2}{HTML}{FFD27F}
\definecolor{c3}{HTML}{99C199}
\definecolor{c4}{HTML}{9999FF}
\definecolor{c5}{HTML}{99D1D1}
\definecolor{c6}{HTML}{D9A6F9}
\definecolor{c7}{HTML}{FFA500}
\newtheorem{theorem}{Theorem}[section]
\newtheorem{proposition}[theorem]{Proposition}

\newtheorem{lemma}[theorem]{Lemma}
\newtheorem{corollary}[theorem]{Corollary}
\theoremstyle{remark}
\newtheorem{definition}[theorem]{Definition}
\newtheorem{construction}[theorem]{Construction}

\newtheorem{remark}[theorem]{Remark}

\newcommand{\defn}[1]{{\color{green!50!black}\emph{#1}}}
\newcommand{\defs}{\stackrel{\mathrm{def}}{=}}
\newcommand{\ie}{{\it{i.e.}},\;}
\renewcommand{\th}{^{\text{th}}}

\newcommand{\Tamari}{\mathcal{T}}
\newcommand{\indicator}{\varrho}
\newcommand{\Symmetric}{\mathfrak{S}}
\newcommand{\weakorder}{\leq_{L}}
\newcommand{\comporder}{\leq_{\mathsf{comp}}}
\newcommand{\Codes}{\mathcal{C}}
\newcommand{\code}{\mathrm{code}}
\newcommand{\proj}{\pi^{\downarrow}}
\newcommand{\Inv}{\mathrm{Inv}}

\newcommand{\bvec}{\mathbf{b}}
\newcommand{\cvec}{\mathbf{c}}
\newcommand{\rvec}{\mathbf{r}}
\newcommand{\bvecset}{\mathcal{B}}
\newcommand{\rvecset}{\mathcal{R}}
\newcommand{\redbvec}{\Lambda_{\mathrm{red}}}
\newcommand{\extrvec}{\Lambda_{\mathrm{ext}}}
\newcommand{\codetorvec}{\Gamma_{\rvecset}}
\newcommand{\rvectocode}{\Delta_{\rvecset}}

\newcommand{\ddyck}{\mathbf{Dyck}}

\newcommand{\conj}{\mathbf{Conj}}
\newcommand{\flip}{\mathbf{Flip}}
\newcommand{\drun}{\mathbf{Drun}}
\newcommand{\cont}{\mathbf{Cont}}

\newcommand{\horiz}{\mathrm{horiz}}
\newcommand{\Paths}{\mathcal{L}}
\newcommand{\Dyck}{\mathcal{D}}

\BibSpec{collection.article}{%
    +{}  {\PrintAuthors}                {author}
    +{,} { \textit}                     {title}
    +{.} { In: }                        {booktitle}
    +{}  { \PrintEditorsB}              {editor}
    +{,}  { }                           {publisher}
    +{,}  { }                           {address}
    +{} { \parenthesize}                {date}
    +{,} { }                            {pages}
    +{.}  {\PrintReviews} {review}
}
\title{A Consecutive Lehmer Code for Parabolic Quotients of the Symmetric Group}
\author{Wenjie Fang}
	\address{WF: LIGM, Univ. Gustave Eiffel, CNRS, ESIEE Paris, F-77454 Marne-la-Vallée, France}
	\email{wenjie.fang@u-pem.fr}
\author{Henri M{\"u}hle}
	\address{HM: Technische Universit{\"a}t Dresden, Institut f{\"u}r Algebra, Zellescher Weg 12--14, 01069 Dresden, Germany.}
	\email{henri.muehle@tu-dresden.de}
\author{Jean-Christophe Novelli}
	\address{JCN: LIGM, Univ. Gustave Eiffel, CNRS, ESIEE Paris, F-77454 Marne-la-Vallée, France}
	\email{novelli@univ-mlv.fr}
\keywords{parabolic quotient, symmetric group, Lehmer code, parabolic Tamari lattice, $\nu$-Tamari lattice, bracket vector}
\subjclass[2010]{05A19, 06B99}
\newcounter{i}
\newcounter{n}

\makeatletter
\let\orgdescriptionlabel\descriptionlabel
\renewcommand*{\descriptionlabel}[1]{%
  \let\orglabel\label
  \let\label\@gobble
  \phantomsection
  \protected@edef\@currentlabel{#1\unskip}%
  \let\label\orglabel
  \orgdescriptionlabel{(#1)}%
}
\makeatother

\newcommand{\AlphaPerm}[5]{
	\edef\alphaList{{#1}}
	\begin{tikzpicture}
		\def\d{#5};
		\def\dx{#4};
		\setcounter{i}{0}
		\setcounter{n}{0}
		\foreach \cl in {#2}{
			\pgfmathsetmacro{\ax}{\alphaList[\thei]}
			\begin{pgfonlayer}{background}
				\fill[\cl]({(\then+1-1.5*\d)*\dx},-2*\d*\dx) -- ({(\then+\ax+1.5*\d)*\dx},-2*\d*\dx) -- ({(\then+\ax+1.5*\d)*\dx},2*\d*\dx) -- ({(\then+1-1.5*\d)*\dx},2*\d*\dx) -- cycle;
			\end{pgfonlayer}
			\addtocounter{n}{\ax}
			\stepcounter{i}
		}
		\setcounter{i}{1}
		\foreach \k in {#3}{
			\draw(\thei*\dx,0) node{\k};
			\addtocounter{i}{1};
		}
	\end{tikzpicture}
}

\begin{document}

\begin{abstract}
In this article we define an encoding for parabolic permutations that
distinguishes between parabolic $231$-avoiding permutations. We prove that
the componentwise order on these codes realizes the parabolic Tamari lattice,
and conclude a direct and simple proof that the parabolic Tamari lattice is
isomorphic to a certain $\nu$-Tamari lattice, with an explicit bijection. Furthermore, we prove that this bijection is closely related to the map $\Theta$ used when the lattice isomorphism was first proved in (Ceballos, Fang and M{\"u}hle, 2020), settling an open problem therein.
\end{abstract}

\maketitle

\section{Introduction}
\label{sec:introduction}

A (right) \defn{inversion} of a permutation $w$ is a pair of indices $(i,j)$
with $i<j$ such that $w(i)>w(j)$. The number of inversions of $w$ can
therefore be regarded as a degree of disorder of $w$.
The \defn{Lehmer code} associated with $w$ is the integer tuple whose $i\th$
entry counts the number of inversions of $w$ of the form
$(i,\cdot)$~\cites{laisant88sur,lehmer60teaching}.  

Bj{\"o}rner and Wachs defined a ``consecutive'' version of the Lehmer code in
\cite{bjorner97shellable}*{Section~9}, which we shall call the \defn{BW-code}
of $w$. This encoding associates an integer tuple with a permutation $w$
whose $i\th$ entry counts the length $k$ of the longest sequence such that
$(i,j)$ is an inversion for all $j\in\{i{+}1,i{+}2,\ldots,i{+}k\}$.

In contrast to the original Lehmer code, the BW-code no longer uniquely
determines a permutation. However, the permutations with the same BW-code
form an interval in the (left) weak order on the group of all permutations,
the \defn{symmetric group}~\cite{bjorner97shellable}*{Proposition~9.10}. This
(left) \defn{weak order} is defined by containment of (right) inversion sets.

Another consequence of \cite{bjorner97shellable}*{Proposition~9.10} is that
among all permutations with the same BW-code, there is a unique permutation
$w$ which \defn{avoids} the pattern $231$, \ie in which no three indices
$i<j<k$ exist such that $w(k)<w(i)<w(j)$, and this permutation minimizes the
number of inversions among all permutations with the same BW-code as $w$.

Let us denote the symmetric group of degree $n$ by $\Symmetric_{n}$, and its
subset of all $231$-avoiding permutations by $\Symmetric_{n}(231)$. The
(left) weak order on $\Symmetric_{n}$ is a lattice, \ie every two elements
have a unique lower bound and a unique upper
bound~\cites{guilbaud71analyse,yanagimoto69partial}.
The restriction of this lattice to $\Symmetric_{n}(231)$ constitutes a
sublattice~\cite{bjorner97shellable}*{Theorem~9.6(i)} and a quotient
lattice~\cite{reading06cambrian}*{Theorem~5.1}. In fact, the resulting
lattice incarnates the famous \defn{Tamari lattice} denoted by
$\Tamari_{n}$~\cite{tamari62algebra}. We can thus see the BW-code as a
concrete and simple way to quotient the weak order on $\Symmetric_{n}$ into
$\Tamari_{n}$.

An analogue of 231-avoiding permutations for parabolic quotients of
$\Symmetric_{n}$ was introduced in \cite{muehle19tamari}, and it was shown
that these permutations constitute a quotient lattice (but no longer a
sublattice) of the corresponding (left) weak order, the \defn{parabolic Tamari
lattice}~\cite{muehle19tamari}*{Theorem~1}. Since any parabolic quotient of
$\Symmetric_{n}$ is naturally indexed by a composition $\alpha$ of $n$, we
call the resulting lattice the \defn{$\alpha$-Tamari lattice}
$\Tamari_{\alpha}$. The main purpose of this article is to define a parabolic
analogue of the BW-code; see Definition~\ref{def:all_codes}. We prove that the
componentwise order on these parabolic BW-codes is isomorphic to
$\Tamari_{\alpha}$.

Let us denote the set of parabolic BW-codes by $\Codes_{\alpha}$, and let us
denote the componentwise order on integer tuples (of the same length) by
$\comporder$. Our first main result now reads as follows.

\begin{theorem}
\label{thm:alpha_tamari_lattice_codes}
For every $n>0$ and every integer composition $\alpha$ of $n$ it holds that
$\Tamari_{\alpha}\cong (\Codes_{\alpha},\comporder)$.
\end{theorem}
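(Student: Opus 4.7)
The plan is to construct mutually inverse bijections between the parabolic $231$-avoiding permutations (which index $\Tamari_{\alpha}$) and the set $\Codes_{\alpha}$ of parabolic BW-codes, and then to verify that these bijections transport covering relations in both directions. The naming conventions $\codetorvec$ and $\rvectocode$ in the preamble suggest that the authors route this bijection through an intermediate family of ``r-vectors'' $\rvecset$, and I would adopt the same strategy: encode each parabolic $231$-avoider first as an r-vector and then convert to a BW-code, using the r-vectors as a flexible intermediate that is easier to manipulate than either extremity.

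For the bijection itself, the first step is injectivity of the BW-code on parabolic $231$-avoiders. I would adapt the argument behind \cite{bjorner97shellable}*{Proposition~9.10}, characterizing the parabolic $231$-avoider as the unique inversion-minimal element in its BW-fiber, using the block-aware notion of inversions from \cite{muehle19tamari}. The second step is surjectivity onto $\Codes_{\alpha}$: I would show that the defining inequalities of $\Codes_{\alpha}$ in Definition~\ref{def:all_codes} are exactly the constraints forced by the block structure of $\alpha$ combined with the parabolic pattern-avoidance condition, and conversely that every tuple satisfying them can be ``decoded'' to a parabolic $231$-avoider by the explicit map $\rvectocode$.

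With the set bijection established, the isomorphism reduces to a match of covering relations. Since both posets are finite, it suffices to check that a cover in $\Tamari_{\alpha}$ is mapped to a cover in $(\Codes_{\alpha},\comporder)$ and conversely. Classically, a cover in $\Tamari_n$ corresponds to a move on $231$-avoiders that increments exactly one BW-entry by one; I would look for the analogous local description in the parabolic setting, using the rotation-like description of covers available for parabolic Tamari lattices.

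The main obstacle, I expect, is precisely this cover-matching step. A cover of $\Tamari_{\alpha}$ is realized by a rotation that may act on entire blocks rather than on a single position, and one must verify that, after transport through the encoding, such a rotation still changes exactly one component of the BW-code and by exactly one. A secondary difficulty is pinning down $\Codes_{\alpha}$ so tightly that no ``phantom'' code appears: integer tuples satisfying the naive block-size inequalities might still fail to arise from any parabolic $231$-avoider, so the inequalities in Definition~\ref{def:all_codes} have to be chosen to rule these out, and this tightness must be proved in tandem with the surjectivity of $\codetorvec$.
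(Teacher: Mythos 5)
Your proposal identifies the right overall shape (a bijection between $\Symmetric_{\alpha}(231)$ and $\Codes_{\alpha}$ plus order-compatibility in both directions), but it leaves the genuinely hard steps unresolved and proposes a route that does not work as stated. First, routing the bijection through the reduced vectors $\rvecset_{\alpha}$ is a misreading of the paper's architecture: $\codetorvec$ and $\rvectocode$ connect $\Codes_{\alpha}$ to the $\nu_{\alpha}$-bracket vectors and are used only to deduce Theorem~\ref{thm:alpha_tamari_isomorphism} \emph{from} Theorem~\ref{thm:alpha_tamari_lattice_codes}; using the $\nu_{\alpha}$-Tamari machinery to prove Theorem~\ref{thm:alpha_tamari_lattice_codes} would be circular unless you independently built a bijection between permutations and reduced vectors, which you do not describe. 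The paper instead works directly with $\code_{\alpha}$ on permutations: surjectivity and injectivity are established together by an inductive decoding argument (Proposition~\ref{prop:code_bijection}), which peels off the position of the leftmost zero of the code --- shown by Lemma~\ref{lem:code_left_zero} to be the position of the value $1$ --- and recurses on a composition of $n-1$. Nothing in your sketch supplies a mechanism for surjectivity or for ruling out ``phantom'' codes; you correctly flag this as a difficulty but offer no idea to resolve it.

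Second, your plan to match covering relations of $\Tamari_{\alpha}$ with covering relations of $(\Codes_{\alpha},\comporder)$ is not how the paper proceeds, and it is the step most likely to fail: covers in $(\Codes_{\alpha},\comporder)$ are not simply ``increment one entry by one'' because of condition \eqref{it:code3}, and covers of $\Tamari_{\alpha}$ (as opposed to covers of the weak order on $\Symmetric_{\alpha}$) do not have the clean one-entry description you hope for. The paper avoids this entirely: it proves that $\code_{\alpha}$ is order-preserving along covers of the \emph{full} weak order on $\Symmetric_{\alpha}$ (Lemma~\ref{lem:code_steps}, using the congruence characterization of $\proj_{\alpha}$ from \cite{muehle19tamari}*{Lemma~3.16}), and proves the converse implication for $231$-avoiders by a direct minimal-counterexample argument on inversion sets (Lemma~\ref{lem:code_comparison_sufficient}, choosing $(i,k)\in\Inv(u)\setminus\Inv(v)$ with $u_{i}-u_{k}$ minimal and exhibiting an $(\alpha,231)$-pattern). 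These two lemmas, together with the bijection, give the poset isomorphism without any analysis of covers of $\Tamari_{\alpha}$ or of $(\Codes_{\alpha},\comporder)$. As it stands, your proposal names the obstacles but does not contain the ideas needed to overcome them.
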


Originally, the Tamari lattice was defined in terms of a ``rotation''
operation on parenthesizations, binary trees or equivalently Dyck paths. A
\defn{northeast path} is a lattice path in $\mathbb{N}^{2}$ comprised of
north steps (marked by $N$) and east steps (marked by $E$) of unit length. A
\defn{Dyck path} of semilength $n$ is equivalent to a northeast path that stays weakly above
the staircase path $(NE)^{n}$ and uses $n$ north and $n$ east steps.

A \defn{rotation} of a northeast path exchanges two portions of the path under
certain conditions, and $\Tamari_{n}$ arises as the rotation order on the set
of Dyck paths of semilength $n$. An extension of this construction was
introduced in \cite{preville17enumeration}. In that paper, the set of all
northeast paths weakly above a fixed northeast path $\nu$, which start and end
at the same coordinates as $\nu$, was considered. Ordering this set by
rotation produces another lattice, the \defn{$\nu$-Tamari
lattice}~\cite{preville17enumeration}*{Theorem~1.1}.  

For any composition $\alpha=(\alpha_{1},\alpha_{2},\ldots,\alpha_{r})$ of $n$,
we can define the $\alpha$-bounce path
$\nu_{\alpha}=N^{\alpha_{1}}E^{\alpha_{1}}N^{\alpha_{2}}E^{\alpha_{2}}\cdots
N^{\alpha_{r}}E^{\alpha_{r}}$. Theorem~II in \cite{ceballos20the} established
that $\Tamari_{\alpha}$ is isomorphic to the $\nu_{\alpha}$-Tamari lattice.
The proof of this result is rather technical, using some deep
lattice-theoretic properties of $\Tamari_{\alpha}$. The second main
contribution of this article is a much simpler and direct proof of this
result.

In general, the $\nu$-Tamari lattice admits a simple encoding as the
componentwise order on so-called $\nu$-bracket
vectors~\cite{ceballos20the}*{Theorem~4.2}. If $\nu=\nu_{\alpha}$, then the
corresponding bracket vectors can be converted in a simple way into parabolic
BW-codes. Since both parabolic BW-codes and bracket vectors are ordered
componentwise, the proof of the next result follows readily.

\begin{theorem}[\cite{ceballos20the}*{Theorem~II}]
\label{thm:alpha_tamari_isomorphism}
For every $n>0$ and every integer composition $\alpha$ of $n$, the
$\nu_{\alpha}$-Tamari lattice is isomorphic to $\Tamari_{\alpha}$.
\end{theorem}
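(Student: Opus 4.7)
The plan is to combine Theorem~\ref{thm:alpha_tamari_lattice_codes} with the bracket vector encoding of the $\nu$-Tamari lattice from \cite{ceballos20the}*{Theorem~4.2}. By that result, the $\nu_{\alpha}$-Tamari lattice is isomorphic to the componentwise order on the set of $\nu_{\alpha}$-bracket vectors. Together with Theorem~\ref{thm:alpha_tamari_lattice_codes}, this places both lattices in a common framework: each is realized as the componentwise order $\comporder$ on an explicit family of integer tuples, namely $\Codes_{\alpha}$ on the one side and the $\nu_{\alpha}$-bracket vectors on the other. The theorem then reduces to exhibiting an order-preserving bijection between these two families of tuples.

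I would construct such a bijection $\Phi$ entry by entry, using the block structure of the $\alpha$-bounce path $\nu_{\alpha}=N^{\alpha_{1}}E^{\alpha_{1}}\cdots N^{\alpha_{r}}E^{\alpha_{r}}$. Each north step of $\nu_{\alpha}$ corresponds naturally to an index $i\in\{1,2,\ldots,n\}$, and each such index indexes both an entry of a parabolic BW-code (counting the longest run of consecutive inversions starting at position $i$) and an entry of a $\nu_{\alpha}$-bracket vector (recording how far east from $\nu_{\alpha}$ the corresponding horizontal segment can slide). The conversion map $\Phi$ should simply be a coordinatewise relabeling, possibly with an index-dependent shift inherited from the lengths of the $\alpha$-blocks.

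Once $\Phi$ is written down, there are two things to verify. First, $\Phi$ sends $\Codes_{\alpha}$ bijectively onto the set of $\nu_{\alpha}$-bracket vectors; this amounts to translating the inequality constraints defining parabolic BW-codes in Definition~\ref{def:all_codes} into the constraints defining $\nu$-bracket vectors, and checking that they coincide. Second, because $\Phi$ is a componentwise affine bijection, it preserves $\comporder$ automatically, so the lattice isomorphism follows without any further rotation-theoretic argument.

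The main obstacle will be pinning down the correct reindexing in step two and ensuring that the constraint systems on the two sides agree exactly; this is essentially a careful dictionary between the BW-code statistics and the geometric data of a path weakly above $\nu_{\alpha}$. Once the dictionary is established, the preservation of order and the bijection of underlying sets both drop out of the definitions, and the proof is completed by composing the three isomorphisms $\Tamari_{\alpha}\cong(\Codes_{\alpha},\comporder)\xrightarrow{\Phi}(\text{bracket vectors},\comporder)\cong\nu_{\alpha}\text{-Tamari lattice}$.
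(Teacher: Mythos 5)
Your plan is essentially the paper's own proof: both chain Theorem~\ref{thm:alpha_tamari_lattice_codes} with the bracket-vector realization of the $\nu_{\alpha}$-Tamari lattice and connect the two tuple families by an explicit componentwise bijection, which preserves $\comporder$ for free. The ``dictionary'' you defer as the main obstacle is exactly the content of the paper's Proposition~\ref{prop:bij-rvectocode}; the only presentational difference is that the paper first strips the $n+1$ fixed entries of the length-$(2n+1)$ bracket vector to get a reduced vector of length $n$, and then reverses and shifts each $\alpha$-block.
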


The original proof of Theorem~\ref{thm:alpha_tamari_isomorphism} in
\cite{ceballos20the} did not provide an explicit map between the two lattices,
but rather passed through their Galois graphs, whose elements are related by a
map $\Theta$ between the two lattices. In \cite{ceballos20the}, it was
postulated as Open Problem~2.23 to prove that $\Theta$ extends to a full
lattice isomorphism, not limited to elements of the Galois graphs. Using
parabolic BW-codes, by introducing a stack processing procedure on
$(\alpha,231)$-avoiding permutations, we settle this open problem
affirmatively, while giving another interpretation of parabolic BW-codes; see Corollary~\ref{coro:phi-theta}.

\medskip

In Section~\ref{sec:basics}, we recall the basic definitions regarding
parabolic quotients of the symmetric group, parabolic pattern avoidance and
the weak order. In Section~\ref{sec:alpha_codes}, we define the parabolic
BW-codes and prove Theorem~\ref{thm:alpha_tamari_lattice_codes}. 

In Section~\ref{sec:lattice_paths}, we recall the definitions of Dyck paths
and northeast paths, as well as ordinary Tamari lattices and $\nu$-Tamari
lattices.
We then sketch a proof of the fact that $\nu$-Tamari lattices are intervals of an ordinary Tamari lattice using a map that has appeared in \cite{preville17enumeration}.  While this map was originally described in terms of trees, we take the perspective of Dyck paths, which makes its description much simpler; see Section~\ref{sec:nu_tamari_intervals}.
In Section~\ref{sec:dyck_sequence_stats}, we use the Dyck path-language to review that the anti-isomorphism on the Tamari lattice exchanges two sequence statistics on Dyck paths.  This property is well-known to experts, but only appears in print in the more general framework of Tamari interval posets~\cite{pons19intervalposets}.  The specialization to the setting of Dyck paths can be deduced from \cite{pons19intervalposets}*{Theorem~23}.

We then prove Theorem~\ref{thm:alpha_tamari_isomorphism} in
Section~\ref{sec:alpha_tamari_lattice_isomorphism} by describing an explicit
conversion from parabolic BW-codes to $\nu_{\alpha}$-bracket vectors.
Finally, in Section~\ref{sec:stack}, we give a combinatorial interpretation of the map 
$\Theta$ mentioned after Theorem~\ref{thm:alpha_tamari_isomorphism} in terms of a certain stack-processing procedure, and relate the bijection
between parabolic BW-codes and $\nu_{\alpha}$-bracket vectors to $\Theta$, thus
solving \cite{ceballos20the}*{Open Problem~2.23}.

\section{$\alpha$-permutations and the $\alpha$-Tamari lattice}
\label{sec:basics}

Throughout this article, we fix an integer $n>0$ and define
$[n]\defs\{1,2,\ldots,n\}$.

\subsection{$\alpha$-permutations}
\label{sec:alpha_permutations}

Let $\alpha=(\alpha_{1},\alpha_{2},\ldots,\alpha_{r})$ be a composition of
$n$. For $a\in[r]$, we define 
\begin{displaymath}
	s_{a}\defs\alpha_{1}+\alpha_{2}+\cdots+\alpha_{a},
\end{displaymath}
and we set $s_{0}\defs 0$.  The set $\{s_{a-1}{+}1,s_{a-1}{+}2,\ldots,s_{a}\}$
is the $a\th$ \defn{$\alpha$-region}.

The \defn{indicator map} $\indicator_{\alpha}\colon[n]\to[r]$ is defined by
$\indicator_{\alpha}(i)=a$ if, and only if $s_{a-1}<i\leq s_{a}$.  In other
words, $\indicator_{\alpha}(i)$ is the index of the $\alpha$-region containing
$i$.  When no confusion will arise, we will drop the subscript $\alpha$.  For
three indices $i<j<k$ with $\indicator(i)<\indicator(k)$, we say that $j$ is
in an $\alpha$-region \defn{strictly between} $i$ and $k$ if
$\indicator(i)<\indicator(j)<\indicator(k)$.

Let $\Symmetric_{n}$ denote the \defn{symmetric group} of degree $n$.  We
consider the subset of \defn{$\alpha$-permutations}, defined by
\begin{displaymath}
	\Symmetric_{\alpha} \defs \{w\in\Symmetric_{n}\mid\text{if}\;\indicator(i)=\indicator(i+1),\;\text{then}\;w(i)<w(i+1)\}.
\end{displaymath}
Clearly, if $\alpha=(1,1,\ldots,1)$, then $\Symmetric_{\alpha}=\Symmetric_{n}$.

\begin{remark}
If we consider the subgroup
$G\defs\Symmetric_{\lvert\alpha_{1}\rvert}\times\Symmetric_{\lvert\alpha_{2}\rvert}\times\cdots\times\Symmetric_{\lvert\alpha_{r}\rvert}$
of $\Symmetric_{n}$, then we may identify $\Symmetric_{\alpha}$ with the set
of minimal-length representatives of the left cosets in $\Symmetric_{n}/G$.
\end{remark}

An $\alpha$-permutation $w\in\Symmetric_{\alpha}$ has an
\defn{$(\alpha,231)$-pattern} if there are three indices $i<j<k$---each in
different $\alpha$-regions---such that $w_{i}<w_{j}$ and $w_{i}=w_{k}+1$.  If
$w$ does not have an $(\alpha,231)$-pattern, then $w$ is
\defn{$(\alpha,231)$-avoiding}. Let $\Symmetric_{\alpha}(231)$ denote the set
of \defn{$(\alpha,231)$-avoiding permutations}.

\begin{remark}
In the case $\alpha=(1,1,\ldots,1)$, the $(\alpha,231)$-avoiding permutations are exactly the \emph{classical} $231$-avoiding permutations:
one can either ask $w_i=w_k+1$ or not, since if $w$ has
any $231$-pattern, then one can find one with the extra condition $w_i=w_k+1$.
In the general case, these notions differ since $i$ could belong to the same
$\alpha$-region as $j$.
For example, $3\;24\;1$ belongs to
$\Symmetric_{(1,2,1)}(231)$ whereas it has a classical $231$-pattern 
spread out over different $\alpha$-regions.
\end{remark}

\subsection{The weak order}
\label{sec:weak_order}

For $w\in\Symmetric_{\alpha}$, we define its \defn{(right) inversion set} by
\begin{displaymath}
	\Inv(w) \defs \bigl\{(i,j)\mid i<j\;\text{and}\;w_{i}>w_{j}\bigr\}.
\end{displaymath}
This enables us to define a partial order---the \defn{(left) weak order}---on
$\Symmetric_{\alpha}$ by setting
\begin{displaymath}
	u\leq_{L}v\quad\text{if, and only if}\quad\Inv(u)\subseteq\Inv(v).
\end{displaymath}
Two permutations $u,v\in\Symmetric_{\alpha}$ form a \defn{cover
relation}---denoted by $u\lessdot_{L}v$---if $u<_{L}v$ and there is no
$w\in\Symmetric_{\alpha}$ with $u<_{L}w<_{L}v$. One easily checks that
$u\lessdot_{L}v$ if, and only if there are two indices $i<j$ in different
$\alpha$-regions, such that $u_{i}=u_{j}-1$, and 
\begin{displaymath}
	v_{k} = \begin{cases}u_{j}, & \text{if}\;k=i,\\ u_{i}, & \text{if}\;k=j,\\ u_{k}, & \text{otherwise}.\end{cases}
\end{displaymath}

The partially ordered set $(\Symmetric_{n},\weakorder)$ is a lattice by
\cite{yanagimoto69partial}*{Theorem~2.1}; see also \cite{guilbaud71analyse}.
For an arbitrary composition $\alpha$ of $n$, it follows from
\cite{bjorner88generalized}*{Theorem~4.1} that
$(\Symmetric_{\alpha},\weakorder)$ is an interval of
$(\Symmetric_{n},\weakorder)$, and thus also a lattice.

The partially ordered set
$\Tamari_{\alpha}\defs\bigl(\Symmetric_{\alpha}(231),\weakorder\bigr)$ is the
\defn{$\alpha$-Tamari lattice}. This name is justified by the following
result.

\begin{theorem}[\cite{muehle19tamari}*{Theorem~1}]\label{thm:alpha_tamari_lattice}
$\Tamari_{\alpha}$ is a lattice for every $n>0$ and every integer composition
$\alpha$ of $n$.
\end{theorem}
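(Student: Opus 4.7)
The plan is to derive Theorem~\ref{thm:alpha_tamari_lattice} from Theorem~\ref{thm:alpha_tamari_lattice_codes}, the first main result of the paper. That result identifies $\Tamari_{\alpha}$ with the componentwise order $(\Codes_{\alpha},\comporder)$ on parabolic BW-codes. Since the componentwise order on any subset $S\subseteq\mathbb{Z}^{n}$ is automatically a lattice as soon as $S$ is closed under coordinatewise minimum and maximum, the whole task reduces to establishing this closure property for $\Codes_{\alpha}$.

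The key step in that reduction is a clean combinatorial characterization of the integer tuples that arise as parabolic BW-codes. I expect this characterization to take the form of local inequalities on consecutive coordinates (refined by the indicator map $\indicator_{\alpha}$), analogous to the classical constraints $0\leq c_{i}\leq n-i$ for the ordinary BW-code, together with extra conditions enforcing the parabolic constraint ``$w_{i}<w_{i+1}$ whenever $\indicator(i)=\indicator(i+1)$'' and the $(\alpha,231)$-avoidance. Once such a characterization is in place, closure under componentwise $\min$ and $\max$ follows immediately, since any such inequality of the form $f(c_{i_{1}},\ldots,c_{i_{k}})\leq g(c_{j_{1}},\ldots,c_{j_{\ell}})$ with $f,g$ monotone is preserved under coordinatewise extremes.

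An alternative route, hinted at in the introduction, would be to exhibit $(\Symmetric_{\alpha}(231),\weakorder)$ as a lattice quotient of $(\Symmetric_{\alpha},\weakorder)$. Concretely, one would build a projection $\proj\colon\Symmetric_{\alpha}\to\Symmetric_{\alpha}(231)$ sending each $w$ to the unique $(\alpha,231)$-avoiding element in its fiber (where fibers are defined via the parabolic BW-code), and verify that $\proj$ is order-preserving, idempotent, satisfies $\proj(w)\leq_{L}w$, and has fibers that are intervals in the weak order. Standard lattice-theoretic arguments then transfer the lattice property from $\Symmetric_{\alpha}$ to $\Symmetric_{\alpha}(231)$.

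The main obstacle in either approach is the same: one must show that $(\alpha,231)$-avoidance interacts cleanly with the parabolic structure. In contrast to the classical case where $\Symmetric_{n}(231)$ is a sublattice of $\Symmetric_{n}$, for general $\alpha$ the set $\Symmetric_{\alpha}(231)$ is not a sublattice of $\Symmetric_{\alpha}$, and so the meet/join inside $\Tamari_{\alpha}$ need not coincide with that in the ambient weak order. This is precisely where the BW-code encoding helps: it absorbs the extra flexibility introduced by the parabolic blocks into a uniform componentwise order, bypassing the need to manipulate $(\alpha,231)$-patterns directly when computing meets and joins.
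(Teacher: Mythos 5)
First, a point of comparison: the paper does not prove this statement at all --- it is imported verbatim from \cite{muehle19tamari}*{Theorem~1} and used as an ingredient. So your plan to rederive it from Theorem~\ref{thm:alpha_tamari_lattice_codes} is by construction a different route; it is not circular at the level of this paper (Theorem~\ref{thm:alpha_tamari_lattice_codes} only uses $\Tamari_{\alpha}$ as a poset), and it is in the spirit of what the introduction advertises. However, your key step fails as stated: $\Codes_{\alpha}$ is \emph{not} closed under componentwise maximum. Take $\alpha=(1,2,1)$. Then $(2,0,0,0)$ and $(0,0,1,0)$ both lie in $\Codes_{(1,2,1)}$ (they are the codes of $3124$ and $1243$ in Figure~\ref{fig:121_tamari_lattice}), but their componentwise maximum is $(2,0,1,0)$, which violates \eqref{it:code3}; the paper itself remarks that \eqref{it:code3} ``gets rid of $(2,0,1,0)$''. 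The flaw in your general argument is that \eqref{it:code3} is a \emph{conditional} constraint: its hypothesis $c_{i}\geq s_{a}-s_{\indicator(i)}$ can hold for the componentwise maximum while holding for only one of the two operands, in which case the other operand imposes no bound on $c_{s_{a}}$ and the conclusion is not inherited. (The join of those two codes in $(\Codes_{(1,2,1)},\comporder)$ is $(3,0,1,0)$, not the componentwise maximum.)

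The approach is repairable, but not by the symmetric min/max argument you give. One checks that $\Codes_{\alpha}$ \emph{is} closed under componentwise minimum --- if $\min(a_{i},b_{i})\geq s_{a}-s_{\indicator(i)}$ then the hypothesis of \eqref{it:code3} holds for both tuples and the two conclusions combine --- and that the tuple with $c_{i}=n-s_{\indicator(i)}$ for all $i$ satisfies \eqref{it:code1}--\eqref{it:code3} and is a componentwise greatest element of $\Codes_{\alpha}$. A finite meet-semilattice with a greatest element is a lattice, so $(\Codes_{\alpha},\comporder)$ is a lattice (with joins computed as meets of upper bounds, not componentwise maxima), and Theorem~\ref{thm:alpha_tamari_lattice_codes} transfers this to $\Tamari_{\alpha}$. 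Your alternative route via a projection is essentially the original proof in \cite{muehle19tamari}: there the entire difficulty is the existence of a greatest $(\alpha,231)$-avoiding element below a given $w$ (Lemma~\ref{lem:alpha_projection}), which your sketch assumes rather than establishes, so as written it defers exactly the content of the theorem.
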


\section{A generalized Lehmer code for $\Symmetric_\alpha$}
\label{sec:alpha_codes}

\subsection{Encoding $\alpha$-permutations}
\label{sec:alpha_encoding}

We consider the following set of integer tuples.  

\begin{definition} \label{def:all_codes}
Let $\Codes_{\alpha}$ denote the set of all integer tuples
$(c_{1},c_{2},\ldots,c_{n})$ with the following properties:
	\begin{description}
		\item[C1\label{it:code1}] $0\leq c_{i}\leq n-s_{\indicator(i)}$ for all $i\in[n]$;
		\item[C2\label{it:code2}] $c_{i}\leq c_{i+1}$ for all $i\in[n-1]$ such that $\indicator(i)=\indicator(i+1)$;
		\item[C3\label{it:code3}] $c_{s_{a}}\leq
c_{i}-s_{a}+s_{\indicator(i)}$ for all $i\in[s_{r-2}]$ and all
$a\in\bigl\{\indicator(i){+}1,\indicator(i){+}2,\ldots,r{-}1\bigr\}$ such that
$c_{i}\geq s_{a}-s_{\indicator(i)}$.
	\end{description}
\end{definition}

The set $\Codes_{(1,1,\ldots,1)}$ is precisely the set of integer tuples
defined in \cite{bjorner97shellable}*{Definition~9.1}.

\begin{remark}
The statement of \eqref{it:code3} is directly true if $a=r$ and trivial
if $i>s_{r-2}$, hence the restriction to $i\in[s_{r-2}]$ and $a<r$.

Indeed, by \eqref{it:code1}, $c_{n}=0$ so that the implication required by
\eqref{it:code3} is trivially satisfied when $a=r$. If $i>s_{r-2}$, then
$\indicator(i)\geq r-1$, so that the only case one could consider is again
$a=r$.
\end{remark}

For example, with $n=3$ and $\alpha=(2,1)$, all conditions boil down to
$0\leq c_1\leq c_2\leq 1$ and $c_3=0$, hence three solutions. With $n=3$ and
$\alpha=(1,2)$, one gets $0\leq c_1\leq 2$ and $0\leq c_2\leq c_3\leq 0$,
again providing three solutions. One can check that they are indeed the codes
obtained in Table~\ref{tab:induction_base} (right column).

To see all conditions of the definition play a role, one has to consider
compositions of at least three parts and at least one greater than one.
For example, if
$\alpha=(1,2,1)$, one gets the following set of relations:
$0\leq c_1\leq 3$, $0\leq c_2\leq c_3\leq 1$, $c_4=0$, and the extra condition
coming from \eqref{it:code3}: $c_1\geq2 \Rightarrow c_3\leq c_1-2$.
In practice, we have twelve tuples satisfying all conditions except the last
one and this last condition gets rid of $(2,0,1,0)$ and $(2,1,1,0)$, hence
providing a total of ten solutions.
One can check that these solutions are exactly the codes obtained in
Figure~\ref{fig:121_tamari_lattice} (bottom elements in each cell
of the drawing).

\smallskip
Given two tuples $\mathbf{a}=(a_{1},a_{2},\ldots,a_{n})$ and
$\mathbf{b}=(b_{1},b_{2},\ldots,b_{n})$ we write
$\mathbf{a}\comporder\mathbf{b}$ if $a_{i}\leq b_{i}$ for all $i\in[n]$. We
claim in Theorem~\ref{thm:alpha_tamari_lattice_codes} that the poset
${(\Codes_{\alpha},\comporder)}$ is isomorphic to $\Tamari_{\alpha}$.
For example, one can check, again on Figure~\ref{fig:121_tamari_lattice} that
the bottom elements are indeed (partially) ordered by the componentwise order
on their tuples.

As a first step towards proving Theorem~\ref{thm:alpha_tamari_lattice_codes},
we associate an integer tuple with each $w\in\Symmetric_{\alpha}$.

\begin{definition}\label{def:perm_to_code}
For $w\in\Symmetric_{\alpha}$, we define its \defn{$\alpha$-code}
by
	\begin{displaymath}
		\code_{\alpha}(w)\defs (c_{1},c_{2},\ldots,c_{n}),
	\end{displaymath}
	where 
	\begin{displaymath}
		c_{i} \defs \max\bigl\{k\mid w_{i}>w_{s_{\indicator(i)}+1},w_{i}>w_{s_{\indicator(i)}+2},\ldots,w_{i}>w_{s_{\indicator(i)}+k}\bigr\}.
	\end{displaymath}
\end{definition}

In other words, $c_{i}$ counts the number of consecutive entries in the
one-line notation of $w$ that are smaller than $w_{i}$, starting from the
first entry in the $\alpha$-region immediately after that of $i$.
For $\alpha=(1,1,\ldots,1)$, Definition~\ref{def:perm_to_code} agrees with
\cite{bjorner97shellable}*{Definition~9.9}.

If $\code_{\alpha}(w)=(c_{1},c_{2},\ldots,c_{n})$, then we say that $w_{i}$
\defn{sees} $w_{k}$ if $0<k-s_{\indicator(i)}\leq c_{i}$. Clearly, if $w_{i}$
sees $w_{k}$, then $(i,k)\in\Inv(w)$, and $w_{i}$ sees exactly $c_{i}$
elements for each index $i$.

In terms of patterns, $c_i$ is the number of $21$-patterns where the $2$ is at
position~$i$ that are not $231$-patterns.
Examples of codes of $\alpha$-permutations are shown in
Table~\ref{tab:induction_base} and Figure~\ref{fig:121_tamari_lattice}.

\subsection{Properties of the encoding}
\label{sec:alpha_code_properties}

\begin{lemma}\label{lem:alpha_code_is_code}
For $w\in\Symmetric_{\alpha}$ it holds that
$\code_{\alpha}(w)\in\mathcal{C}_{\alpha}$.
\end{lemma}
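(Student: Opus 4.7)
The plan is to verify the three conditions \ref{it:code1}, \ref{it:code2}, \ref{it:code3} of Definition~\ref{def:all_codes} one by one, directly from the definition of $\code_\alpha(w)$. The whole argument is a matter of unpacking what it means for $w_i$ to ``see'' the consecutive block $w_{s_{\indicator(i)}+1}, \ldots, w_{s_{\indicator(i)}+c_i}$, together with two elementary facts about $\alpha$-permutations: entries inside the same $\alpha$-region are increasing, and the relation $w_i > w_j$ is transitive.

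For \ref{it:code1}, I would simply note that $c_i$ counts entries at positions strictly greater than $s_{\indicator(i)}$, of which there are exactly $n - s_{\indicator(i)}$, and that $k=0$ is always an admissible value in the defining maximum, so $0 \le c_i \le n-s_{\indicator(i)}$. For \ref{it:code2}, assume $\indicator(i) = \indicator(i+1) = b$. By the definition of $\Symmetric_\alpha$, we have $w_i < w_{i+1}$. Since $c_i$ and $c_{i+1}$ both start from position $s_b+1$, any entry $w_{s_b+j}$ with $w_i > w_{s_b+j}$ also satisfies $w_{i+1} > w_{s_b+j}$; therefore the maximum $k$ for $w_{i+1}$ is at least the one for $w_i$, giving $c_i \le c_{i+1}$.

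The main (though still easy) step is \ref{it:code3}. Fix $i \in [s_{r-2}]$, set $b = \indicator(i)$, and let $a \in \{b{+}1, \ldots, r{-}1\}$ with $c_i \ge s_a - s_b$. By the definition of $c_i$, the inequalities $w_i > w_{s_b+1}, w_i > w_{s_b+2}, \ldots, w_i > w_{s_b + (s_a - s_b)}$ all hold, which in particular gives $w_i > w_{s_a}$. Now write $k = c_{s_a}$. By the definition of $c_{s_a}$ we have $w_{s_a} > w_{s_a+1}, \ldots, w_{s_a} > w_{s_a + k}$, and by transitivity also $w_i > w_{s_a + 1}, \ldots, w_i > w_{s_a + k}$. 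Concatenating the two chains of inequalities shows that $w_i$ sees at least the consecutive block $w_{s_b+1}, \ldots, w_{s_b + (s_a - s_b + k)}$, and therefore
\begin{displaymath}
    c_i \;\ge\; (s_a - s_b) + k \;=\; s_a - s_{\indicator(i)} + c_{s_a}.
\end{displaymath}
Rearranging yields $c_{s_a} \le c_i - s_a + s_{\indicator(i)}$, which is exactly \ref{it:code3}.

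I do not anticipate any real obstacle; the only care needed is to match the ``consecutive starting from $s_{\indicator(i)}+1$'' convention of Definition~\ref{def:perm_to_code} when splicing the two blocks together, and to record why the boundary cases ($a = r$ or $i > s_{r-2}$) can be ignored, which the remark immediately following Definition~\ref{def:all_codes} has already explained.
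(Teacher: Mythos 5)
Your proposal is correct and follows essentially the same route as the paper's proof: \eqref{it:code1} from counting available positions, \eqref{it:code2} from the increasing entries within an $\alpha$-region, and \eqref{it:code3} by observing that $c_i\geq s_a-s_{\indicator(i)}$ forces $w_i>w_{s_a}$, so that $w_i$ sees (by transitivity and consecutiveness) everything $w_{s_a}$ sees. Your write-up merely spells out the splicing of the two consecutive blocks that the paper compresses into one sentence.
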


\begin{proof}
Let $w\in\Symmetric_{\alpha}$ and
$\code_{\alpha}(w)=(c_{1},c_{2},\ldots,c_{n})$.  Let $i\in[n]$.  The maximal
number of inversions of the form $(i,k)$ is $n-s_{\indicator(i)}$, because
$w_{i}<w_{k}$ for all $k\in\{i{+}1,i{+}2,\ldots,s_{\indicator(i)}\}$.
Hence, $c_{i}\leq n-s_{\indicator(i)}$, which establishes \eqref{it:code1}.
If $\indicator(i)={\indicator(i+1)}$, then $w_{i}<w_{i+1}$ by construction,
and thus $c_{i}\leq c_{i+1}$. This establishes \eqref{it:code2}.
	
Now let $k\in\bigl\{\indicator(i){+}1,\indicator(i){+}2,\ldots,r\bigr\}$ be
such that $c_{i}\geq s_{k}-s_{\indicator(i)}$. In particular, $w_{i}$ sees
$w_{s_{k}}$, meaning that $w_{i}>w_{s_{k}}$.
Since $w_{s_k}$ is the rightmost hence largest element of its
region, $w_{i}$ also sees any $w_{j}$ which is seen by $w_{s_{k}}$.
This implies that $c_{i}\geq c_{s_{k}}+s_{k}-s_{\indicator(i)}$, which is
\eqref{it:code3}.
\end{proof}

Theorem~1.1 in \cite{muehle19tamari} establishes that the $\alpha$-Tamari
lattice arises as a quotient lattice of the weak order on
$\Symmetric_{\alpha}$. This is established by proving that for every
$w\in\Symmetric_{\alpha}$ there exists a unique maximal
$(\alpha,231)$-avoiding permutation below $w$ in the weak order.
The next lemma records this fact.

\begin{lemma}[\cite{muehle19tamari}*{Lemma~3.8}]
\label{lem:alpha_projection}
For $w\in\Symmetric_{\alpha}$, the set $\{w'\in\Symmetric_{\alpha}(231)\mid
w'\weakorder w\}$ has a greatest element denoted by $\proj_{\alpha}(w)$.
\end{lemma}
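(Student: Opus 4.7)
The plan is by induction on the length $\lvert\Inv(w)\rvert$, following the strategy of \cite{muehle19tamari}*{Lemma~3.8}. Let $X_{w}\defs\{w'\in\Symmetric_{\alpha}(231)\mid w'\weakorder w\}$; this set is always nonempty, since the identity is $(\alpha,231)$-avoiding and lies below every $w$ in weak order. If $w$ is itself $(\alpha,231)$-avoiding, there is nothing to do: $w$ is trivially the greatest element of $X_{w}$.

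Otherwise, pick an $(\alpha,231)$-pattern at positions $i<j<k$ (in distinct $\alpha$-regions) with $w_{i}<w_{j}$ and $w_{i}=w_{k}+1$, and let $w'$ be the result of exchanging the values $w_{i}$ and $w_{k}$. Two direct checks are needed: first, $w'\in\Symmetric_{\alpha}$, because swapping two consecutive values never violates the increasing condition within an $\alpha$-region; second, $\Inv(w)\setminus\Inv(w')=\{(i,k)\}$, because no value lies strictly between $w_{k}$ and $w_{i}$, so the relative order of every other pair is preserved. Hence $w'\lessdot_{L}w$, and by induction $X_{w'}$ admits a greatest element $\hat{w}$. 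Setting $\proj_{\alpha}(w)\defs\hat{w}$ will then satisfy the lemma once we establish $X_{w}=X_{w'}$.

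The inclusion $X_{w'}\subseteq X_{w}$ is immediate. For the reverse, it suffices to show that every $v\in X_{w}$ satisfies $(i,k)\notin\Inv(v)$. Suppose for contradiction $v_{i}>v_{k}$; since $(i,j)\notin\Inv(w)\supseteq\Inv(v)$, we also have $v_{i}<v_{j}$. In the case $v_{i}=v_{k}+1$ the triple $(i,j,k)$ already realizes an $(\alpha,231)$-pattern in $v$, contradicting $v\in\Symmetric_{\alpha}(231)$; otherwise one must trace the position $p$ of the value $v_{k}+1$ in $v$ and construct a forbidden pattern from a suitable triple involving $p$ together with two of $i,j,k$. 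This last step is the main obstacle: the possible locations of $p$ (before $i$, between $i$ and $j$, between $j$ and $k$, or after $k$, each falling in various $\alpha$-regions relative to the three original regions) demand a careful case analysis, which forms the technical heart of the argument in \cite{muehle19tamari}.
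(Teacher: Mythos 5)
This lemma is quoted in the paper from \cite{muehle19tamari}*{Lemma~3.8} without an in-paper proof, so there is nothing internal to compare against; I can only assess your argument on its own terms, and as written it is incomplete. The inductive frame is sound: the swap does give $w'\in\Symmetric_{\alpha}$ with $\Inv(w)\setminus\Inv(w')=\{(i,k)\}$, hence $w'\lessdot_{L}w$, and the lemma would follow once you show $X_{w}\subseteq X_{w'}$, i.e.\ that no $v\in X_{w}$ has $(i,k)\in\Inv(v)$. But that inclusion is the entire content of the lemma, and you only prove it in the special case $v_{i}=v_{k}+1$, explicitly deferring the general case to ``a careful case analysis'' that you describe but do not carry out. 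A proof that names its main step and then omits it is a sketch, not a proof.

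Moreover, the route you indicate for the missing step is not obviously viable. Producing the triple $(i,j,k)$ with $v_{k}<v_{i}<v_{j}$ in three distinct $\alpha$-regions does \emph{not} by itself yield an $(\alpha,231)$-pattern when $v_{i}>v_{k}+1$: the paper's own remark in Section~\ref{sec:alpha_permutations} gives $3\,24\,1\in\Symmetric_{(1,2,1)}(231)$, which contains exactly such a ``loose'' pattern at positions $(1,3,4)$ yet is $(\alpha,231)$-avoiding. So simply locating the position $p$ of the value $v_{k}+1$ and combining it with two of $i,j,k$ can fail (the resulting triple need not occupy three distinct regions, and the required inequalities need not hold); any successful argument must exploit the hypothesis $\Inv(v)\subseteq\Inv(w)$ to control where the intermediate values $v_{k}+1,v_{k}+2,\ldots,v_{i}-1$ can sit, typically via a chain of consecutive values rather than a single auxiliary position. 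You should also justify that the conclusion holds for an \emph{arbitrarily chosen} pattern $(i,j,k)$ of $w$, or else specify which pattern you select; the claim $X_{w}=X_{w'}$ must hold for the particular swap you perform. Until the case $v_{i}>v_{k}+1$ is handled in full, the proposal has a genuine gap at its central step.
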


We may thus regard $\proj_{\alpha}$ as a map from $\Symmetric_{\alpha}$ to
$\Symmetric_{\alpha}(231)$.  The next lemma characterizes the preimages of
this map.

\begin{lemma}[\cite{muehle19tamari}*{Lemma~3.16}]\label{lem:alpha_congruence}
Let $u,v\in\Symmetric_{\alpha}$ with $u\lessdot_{L}v$.  The following are
equivalent.
\begin{enumerate}[\rm (i)]
\item There are indices $i<j<k$, each in different $\alpha$-regions, such that
$v_{k}<v_{i}<v_{j}$, $v_{i}=v_{k}+1$ and
$\Inv(v)\setminus\Inv(u)=\bigl\{(i,k)\bigr\}$,
\item $\proj_{\alpha}(u)=\proj_{\alpha}(v)$.
\end{enumerate}
\end{lemma}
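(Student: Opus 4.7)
The plan is to handle the two implications separately, using the explicit cover description in $(\Symmetric_{\alpha}, \weakorder)$ recalled in Section~\ref{sec:weak_order} together with the defining property of $\proj_{\alpha}$ from Lemma~\ref{lem:alpha_projection}. That cover description already fixes a unique pair $i < k$ in different $\alpha$-regions with $\Inv(v)\setminus\Inv(u) = \{(i,k)\}$ and $v_i = v_k + 1$, so both directions reduce to controlling the existence of a middle index $j$ with $\indicator(i) < \indicator(j) < \indicator(k)$ and $v_j > v_i$.

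For (i) $\Rightarrow$ (ii), since $\proj_{\alpha}$ is order-preserving (directly from Lemma~\ref{lem:alpha_projection}), we have $\proj_{\alpha}(u) \weakorder \proj_{\alpha}(v)$, so it suffices to show $\proj_{\alpha}(v) \weakorder u$. I would prove the stronger claim that every $w' \in \Symmetric_{\alpha}(231)$ with $w' \weakorder v$ satisfies $(i,k) \notin \Inv(w')$; combined with $\Inv(w') \subseteq \Inv(v) = \Inv(u) \cup \{(i,k)\}$, this yields $w' \weakorder u$. If instead $w'_i > w'_k$, then $(i,j) \notin \Inv(v) \supseteq \Inv(w')$ forces $w'_i < w'_j$, so positions $i < j < k$ in three distinct $\alpha$-regions carry the values $w'_k < w'_i < w'_j$ in $w'$. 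From this positional $231$-configuration I would extract an honest $(\alpha,231)$-pattern of $w'$ by a pivoting argument: locate the position $\ell$ of the value $w'_k + 1$, and using $\alpha$-monotonicity of $w'$ inside each $\alpha$-region together with a case analysis on the $\alpha$-region of $\ell$, produce indices $i^\ast < j^\ast < k^\ast$ in distinct $\alpha$-regions with $w'_{i^\ast} = w'_{k^\ast} + 1$ and $w'_{j^\ast} > w'_{i^\ast}$, contradicting $w' \in \Symmetric_{\alpha}(231)$.

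For (ii) $\Rightarrow$ (i), I would argue by contrapositive. Assuming no $j$ with $\indicator(i) < \indicator(j) < \indicator(k)$ and $v_j > v_i$ exists, I would exhibit some $w' \in \Symmetric_{\alpha}(231)$ with $w' \weakorder v$ and $(i,k) \in \Inv(w')$; since $(i,k) \notin \Inv(u)$, this gives $w' \not\weakorder u$, hence $\proj_{\alpha}(v) \not\weakorder u$, contradicting $\proj_{\alpha}(u) = \proj_{\alpha}(v)$. The construction would perform a greedy descent from $v$, iteratively applying swaps of the type in (i) at $(\alpha,231)$-patterns \emph{not} involving the inversion $(i,k)$; the ``no witness'' hypothesis combined with a careful choice of swap order ensures that the inversion $(i,k)$ is preserved throughout and that the terminal permutation is $(\alpha,231)$-avoiding.

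The main obstacle is the passage from a positional $231$-configuration to a genuine $(\alpha,231)$-pattern with the equality $w_i = w_k + 1$: as the remark following the definition of $\Symmetric_{\alpha}(231)$ emphasizes, these two notions genuinely differ for general $\alpha$. Most of the technicality lies in the pivoting bookkeeping of (i) $\Rightarrow$ (ii) and in showing that the descent in (ii) $\Rightarrow$ (i) does not inadvertently place a value above $v_i$ at a position strictly between $i$ and $k$ in an intermediate $\alpha$-region.
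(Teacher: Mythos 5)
First, a point of order: the paper does not prove this statement at all --- it is imported verbatim as Lemma~3.16 of \cite{muehle19tamari} --- so there is no in-paper argument to compare yours against; I can only assess your outline on its own terms. Your reduction is sound as far as it goes: the cover description of $\lessdot_{L}$ does pin down the unique pair $(i,k)$ with $\Inv(v)\setminus\Inv(u)=\{(i,k)\}$ and $v_{i}=v_{k}+1$, monotonicity of $\proj_{\alpha}$ is indeed immediate from Lemma~\ref{lem:alpha_projection}, and reducing (i)$\Rightarrow$(ii) to the claim that no $(\alpha,231)$-avoiding $w'\weakorder v$ has $(i,k)\in\Inv(w')$ is the right move.

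However, both places where you write ``I would'' are exactly where the content of the lemma lives, and the first one, as described, does not work. You propose to extract a genuine $(\alpha,231)$-pattern from the positional configuration $w'_{k}<w'_{i}<w'_{j}$ (with $i,j,k$ in distinct $\alpha$-regions) by pivoting on the position of the value $w'_{k}+1$ and using monotonicity within regions. That cannot succeed from the positional data alone: the paper's own remark after the definition of $\Symmetric_{\alpha}(231)$ exhibits $3\,2\,4\,1\in\Symmetric_{(1,2,1)}(231)$, which carries precisely such a configuration at positions $(1,3,4)$ in three distinct regions yet avoids every $(\alpha,231)$-pattern, because the value $w'_{k}+1=2$ sits in the middle region, where no strictly intermediate region is available for the ``$3$'' of the pattern. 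The missing ingredient is the hypothesis $\Inv(w')\subseteq\Inv(v)$ combined with $v_{i}=v_{k}+1$: every $\ell$ with $i<\ell<k$ satisfies $v_{\ell}<v_{k}$ or $v_{\ell}>v_{i}$, and comparing inversion sets then forces $w'_{\ell}<w'_{k}$ or $w'_{\ell}>w'_{i}$, so no value strictly between $w'_{k}$ and $w'_{i}$ occupies a position strictly between $i$ and $k$. Only with this fact does the pivot succeed (take the smallest value $m+1\in\{w'_{k}+1,\ldots,w'_{i}\}$ whose position is at most $i$, pair it with $m$, which then sits at a position at least $k$, and reuse $j$ as the middle letter); your sketch never invokes it, so as written this direction has a genuine hole. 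The direction (ii)$\Rightarrow$(i) is in worse shape: the ``greedy descent with a careful choice of swap order'' is not constructed, and proving that it terminates at an $(\alpha,231)$-avoiding permutation while preserving the inversion $(i,k)$ is essentially a re-proof of Lemma~\ref{lem:alpha_projection} itself; deferring it to a one-clause promise leaves that entire implication unestablished.
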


We now prove that $\code_{\alpha}$ is an order-preserving map from
$(\Symmetric_{\alpha},\weakorder)$ to ${(\Codes_{\alpha},\comporder)}$.

\begin{lemma}\label{lem:code_steps}
Let $u,v\in\Symmetric_{\alpha}$ with $u\lessdot_{L}v$. Then
$\code_{\alpha}(u)\comporder\code_{\alpha}(v)$, and these tuples differ by at
most one element. Moreover, $\code_{\alpha}(u)=\code_{\alpha}(v)$ if and
only if $\proj_{\alpha}(u)=\proj_{\alpha}(v)$.
\end{lemma}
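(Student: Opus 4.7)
The plan is to exploit the locality of the cover. By the description of cover relations, $u \lessdot_L v$ swaps the two values $u_i = m$ and $u_j = m+1$ at positions $i<j$ lying in distinct $\alpha$-regions, with $a := \varrho(i) < \varrho(j) =: b$, and leaves every other entry intact. Only finitely many code entries can therefore be affected, and I would inspect them one by one.

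First, for any $\ell \notin \{i,j\}$ the value $u_\ell = v_\ell$ lies outside $\{m, m+1\}$, so the swap exchanges two values that lie on the same side of $u_\ell$; hence for each $p > s_{\varrho(\ell)}$, the condition ``$u_p < u_\ell$'' is unaffected and $c_\ell(u) = c_\ell(v)$. A similar argument, applied to positions $p > s_b$ (where no value in $\{m, m+1\}$ appears in either $u$ or $v$), yields $c_j(u) = c_j(v)$. The interesting entry is $c_i$: for every $p > s_a$ with $p \neq j$ the conditions ``$u_p < m$'' and ``$v_p < m+1$'' coincide, while at $p = j$ the new value $v_j = m$ is strictly below $v_i = m+1$ whereas the old value $u_j = m+1$ is not strictly below $u_i = m$. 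From this I would conclude $c_i(v) \geq c_i(u)$, with equality precisely when the prefix counted by $c_i(u)$ terminates strictly before position $j$, equivalently when some $p \in (s_a, j)$ carries $u_p > m+1$. This establishes $\code_{\alpha}(u) \comporder \code_{\alpha}(v)$ and that the two codes differ in at most the entry $i$.

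For the last assertion, Lemma~\ref{lem:alpha_congruence} rephrases $\proj_{\alpha}(u) = \proj_{\alpha}(v)$ as the existence of an index $\ell$ with $i < \ell < j$ in an $\alpha$-region strictly between $a$ and $b$ such that $u_\ell > m+1$. The main subtlety I anticipate is matching this with the condition coming from the $c_i$ analysis: a priori that condition allows a witness $p$ to lie in region $b$ itself. The reconciliation uses the $\alpha$-monotonicity of $u$ inside region $b$, which forces $u_p < u_j = m+1$ for every $s_{b-1} < p < j$; hence only indices in regions strictly between $a$ and $b$ can witness ``$u_p > m+1$''. The two conditions coincide, and the equivalence $\code_{\alpha}(u) = \code_{\alpha}(v)$ if and only if $\proj_{\alpha}(u) = \proj_{\alpha}(v)$ follows.
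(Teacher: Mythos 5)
Your proposal is correct and takes essentially the same route as the paper: a position-by-position analysis of the single transposition defining the cover, showing that only the entry $c_i$ can change and that it increases exactly when $v_i$ sees $v_k$, followed by an appeal to Lemma~\ref{lem:alpha_congruence} for the equivalence with $\proj_{\alpha}(u)=\proj_{\alpha}(v)$. The subtlety you flag (a potential witness $p$ lying in the same region as the larger swapped index, ruled out by monotonicity within $\alpha$-regions) is the same point the paper addresses by choosing the minimal index $j$ with $v_i<v_j$ and restricting to regions strictly between.
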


\begin{proof}
Let $u\lessdot_{L}v$ and $\code_{\alpha}(u)=(a_{1},a_{2},\ldots,a_{n})$ and
$\code_{\alpha}(v)=(b_{1},b_{2},\ldots,b_{n})$.

By assumption, $\Inv(v)\setminus\Inv(u)=\bigl\{(i,k)\bigr\}$ for some indices
$i<k$ in different $\alpha$-regions such that $v_{i}=v_{k}+1$. It follows
that any entry which sees $v_{k}$ must be greater than $v_{i}$, and any entry
which does not see $v_{i}$ must be smaller than $v_{k}$.  Thus, $a_{j}=b_{j}$
for all $j\neq i$.

By construction, $u_{i}=v_{k}$ and $u_{k}=v_{i}$. Since $u_{i}<u_{k}$, we
conclude that $u_{i}$ does not see $u_{k}$.

If $v_{i}$ sees $v_{k}$, then $a_{i}<b_{i}$. This is the case precisely when
every $j$ in $\alpha$-regions strictly between $i$ and $k$ satisfies
$v_{j}<v_{i}$, which by Lemma~\ref{lem:alpha_congruence} means that
$\proj_{\alpha}(u)\neq\proj_{\alpha}(v)$.
	
If $v_{i}$ does not see $v_{k}$, then there exists an index $j$ in an
$\alpha$-region strictly between $i$ and $k$ such that $v_{i}<v_{j}$, which by
Lemma~\ref{lem:alpha_congruence} is equivalent to
$\proj_{\alpha}(u)=\proj_{\alpha}(v)$.  If we choose $j$ as small as possible
with this property, then any $j'<j$ in an $\alpha$-region strictly between $i$
and $k$ satisfies $v_{i}>v_{j'}$, and thus $u_{i}=v_{k}>v_{j'}=u_{j'}$, which
entails $a_{i}=b_{i}$.
\end{proof}

\begin{corollary}\label{cor:code_comparison_necessary}
If $u\weakorder v$, then $\code_{\alpha}(u)\comporder\code_{\alpha}(v)$.
\end{corollary}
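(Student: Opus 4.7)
The plan is to derive the corollary as an immediate consequence of Lemma~\ref{lem:code_steps} together with the fact that the weak order on $\Symmetric_{\alpha}$ is a finite poset in which any two comparable elements are connected by a saturated chain of cover relations.

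More concretely, suppose $u \weakorder v$. Because $(\Symmetric_{\alpha},\weakorder)$ is an interval of the finite lattice $(\Symmetric_{n},\weakorder)$, we can pick a saturated chain
\begin{displaymath}
  u = w_{0} \lessdot_{L} w_{1} \lessdot_{L} \cdots \lessdot_{L} w_{\ell} = v
\end{displaymath}
in $(\Symmetric_{\alpha},\weakorder)$. Applying Lemma~\ref{lem:code_steps} to each cover $w_{j}\lessdot_{L} w_{j+1}$ yields $\code_{\alpha}(w_{j})\comporder\code_{\alpha}(w_{j+1})$ for every $j\in\{0,1,\ldots,\ell-1\}$. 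Since $\comporder$ is transitive on integer tuples of length $n$, chaining these inequalities together gives $\code_{\alpha}(u)\comporder\code_{\alpha}(v)$, as required.

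There is no real obstacle here: the delicate case analysis, namely tracking how $\code_{\alpha}$ behaves when a single inversion is added, was already absorbed into Lemma~\ref{lem:code_steps}. The only thing one might want to double-check is that a saturated chain truly stays inside $\Symmetric_{\alpha}$, but this is guaranteed by the description of cover relations recalled in Section~\ref{sec:weak_order}, where the covers of $(\Symmetric_{\alpha},\weakorder)$ are shown to swap two entries in \emph{different} $\alpha$-regions, so each $w_{j}$ remains an $\alpha$-permutation.
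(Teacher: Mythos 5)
Your proposal is correct and matches the paper's proof, which simply invokes repeated application of Lemma~\ref{lem:code_steps}; you have just made explicit the saturated chain of covers and the transitivity of $\comporder$ that the paper leaves implicit. Your extra remark that the chain stays inside $\Symmetric_{\alpha}$ is a reasonable sanity check, justified by the fact that $(\Symmetric_{\alpha},\weakorder)$ is itself a lattice (indeed an interval of $(\Symmetric_{n},\weakorder)$), so no further argument is needed.
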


\begin{proof}
This follows from repeated application of Lemma~\ref{lem:code_steps}.
\end{proof}

\begin{lemma}\label{lem:code_comparison_sufficient}
Let $u,v\in\Symmetric_{\alpha}(231)$.  If
$\code_{\alpha}(u)\comporder\code_{\alpha}(v)$, then $u\weakorder v$.
\end{lemma}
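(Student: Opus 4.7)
The plan is to prove the sharper statement that for every $w \in \Symmetric_\alpha(231)$ the inversion set $\Inv(w)$ coincides with the transitive closure $\overline{V(w)}$ of
\[
V(w) := \bigl\{(i,\,s_{\indicator(i)}+j) : i \in [n],\ 1 \leq j \leq (\code_\alpha(w))_i\bigr\}.
\]
Granted this sharper claim, Lemma~\ref{lem:code_comparison_sufficient} follows at once: $\code_\alpha(u)\comporder\code_\alpha(v)$ implies $V(u)\subseteq V(v)$, hence $\overline{V(u)}\subseteq\overline{V(v)}$, hence $\Inv(u)\subseteq\Inv(v)$, i.e.\ $u\weakorder v$.

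The inclusion $\overline{V(w)}\subseteq\Inv(w)$ is immediate: every element of $V(w)$ is an inversion by definition of $\code_\alpha$ (Definition~\ref{def:perm_to_code}), and inversion sets of permutations are transitively closed. For the reverse inclusion $\Inv(w)\subseteq\overline{V(w)}$, I would induct on the value-difference $w_i-w_k$ for an inversion $(i,k)\in\Inv(w)$. In the base case $w_i-w_k=1$, any $j$ strictly between $i$ and $k$ in an $\alpha$-region different from both $\indicator(i)$ and $\indicator(k)$ with $w_j>w_i$ would be an $(\alpha,231)$-pattern, so every such $j$ has $w_j<w_k<w_i$; combined with monotonicity of $w$ inside the region of $k$, this forces $(\code_\alpha(w))_i\geq k-s_{\indicator(i)}$ and puts $(i,k)\in V(w)$ directly.

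For the inductive step, let $q$ be the position of the value $w_k+1$ in $w$. I split into three cases. If $i<q<k$, then both $(i,q)$ and $(q,k)$ are inversions with strictly smaller value-difference; the induction hypothesis produces chains in $\overline{V(w)}$ that concatenate. If $q<i$, the triple $(q,b,k)$ provides an $(\alpha,231)$-avoidance constraint forcing every position $b\in(s_{\indicator(i)},k]$ to carry a value below $w_i$, so $(i,k)\in V(w)$ directly. If $q>k$, apply the induction hypothesis to $(i,q)$ and redirect the final step $(i_{m-1},q)$ of the resulting chain to end at $k$: since $(i_{m-1},q)\in V(w)$ means $q=s_{\indicator(i_{m-1})}+j$ with $j\leq (\code_\alpha(w))_{i_{m-1}}$, and $k<q$ lies past the region of $i_{m-1}$, one checks that $(i_{m-1},k)\in V(w)$ as well.

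The main obstacle is the third case: a priori the chain provided by the induction hypothesis might pass through a position $i_l$ sharing a region with $k$, so that the straightforward redirection fails because $(i_l,k)$ is not an inversion. I expect to handle this by either choosing the chain more carefully or inserting $k$ at an earlier place in the chain, leveraging the fact (from condition~(C2)) that $c_{i_l}\leq c_k$ within a region, together with the $(\alpha,231)$-avoidance constraints on the intermediate positions between $k$ and $q$.
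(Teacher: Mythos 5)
Your route is genuinely different from the paper's. The paper argues by contradiction directly on the pair $(u,v)$: it picks $(i,k)\in\Inv(u)\setminus\Inv(v)$ with $u_{i}-u_{k}$ minimal and manufactures an $(\alpha,231)$-pattern in $u$; it never isolates a description of $\Inv(w)$ itself. You instead prove the stronger structural statement that for $w\in\Symmetric_{\alpha}(231)$ the inversion set equals the transitive closure $\overline{V(w)}$ of the ``seeing'' relation, from which the lemma is a one-line monotonicity consequence. This is a real gain: it explains the paper's closing remark that the avoidance of $v$ is never used (only $\Inv(u)\subseteq\overline{V(u)}$ needs avoidance, while $\overline{V(v)}\subseteq\Inv(v)$ holds for every $v$), and it yields a reusable description of $\Inv(w)$. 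Your base case and Cases 1--2 are sound as sketched, modulo routine checks that the three positions involved really lie in pairwise distinct $\alpha$-regions.

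The one step you leave open, Case 3, closes more easily than you anticipate, and not by the route you propose. Along a chain $i=i_{0}<i_{1}<\cdots<i_{m}=q$ with each $(i_{l},i_{l+1})\in V(w)$ the values strictly decrease, so $w_{i_{l}}\geq w_{q}=w_{k}+1>w_{k}$ for every $l$; hence no chain position to the left of $k$ can lie in the $\alpha$-region of $k$, since values increase within a region. Now take the largest $l$ with $i_{l}<k$ (it exists because $i_{0}=i<k<q=i_{m}$). Either $i_{l+1}=k$ and you simply truncate the chain, or $i_{l+1}>k$; in the latter case $\indicator(i_{l})<\indicator(k)$ by the observation above, so $0<k-s_{\indicator(i_{l})}<i_{l+1}-s_{\indicator(i_{l})}\leq c_{i_{l}}$, whence $(i_{l},k)\in V(w)$ and $i_{0},\ldots,i_{l},k$ is the desired chain. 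The fix you suggest via \eqref{it:code2} aims at the wrong difficulty: if some $i_{l}<k$ shared the region of $k$, then $(i_{l},k)$ would fail to be an inversion at all, and no inequality between $c_{i_{l}}$ and $c_{k}$ could repair that; the point is that this configuration cannot occur.
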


\begin{proof}
Let $\code_{\alpha}(u)=(a_{1},a_{2},\ldots,a_{n})$ and
$\code_{\alpha}(v)=(b_{1},b_{2},\ldots,b_{n})$ such that
$\code_{\alpha}(u)\comporder\code_{\alpha}(v)$.
	
Assume that there exists $(i,k)\in\Inv(u)\setminus\Inv(v)$, and among all
these inversions choose $(i,k)$ such that $u_{i}-u_{k}$ is minimal.
Since $(i,k)$ is not an inversion of $v$, we have $v_{i}<v_{k}$, so that
$v_{i}$ does not see $v_{k}$. Since $a_{i}\leq b_{i}$ it follows that $u_{i}$
does not see $u_{k}$ either. Since $u_{i}>u_{k}$,
$\indicator(i)<\indicator(k)$ and there exists a smallest index $j$ with
$\indicator(i)<\indicator(j)<\indicator(k)$ and $u_{i}<u_{j}$. Since
$u\in\Symmetric_{\alpha}(231)$, we have that $u_{i}>u_{k}+1$.

Now, there cannot be any element between $u_k+1$ and $u_i-1$ in the same
$\alpha$-region as $u_j$. Indeed, if this was the case, since $u_j>u_i$,
$u_{j-1}$ would be such an element. But, since it is seen by $u_i$ by
minimality of $j$, and since $b_{i}\geq a_i$, the value $v_{j-1}$ would be
seen by $v_i$, so that $v_k>v_i>v_{j-1}$. In that case, $(j-1,k)$ would be an
inversion of $u$, not an inversion of $v$ and would violate the minimality of
$(i,k)$ among such elements as defined earlier.
So all elements between $u_k$ and $u_i$ belong to $\alpha$-regions different
from the $\alpha$-region containing $u_j$.
Thus, among those, there is a smallest one $u_{\ell}$ (which is not $u_k$ but
can be $u_i$) that is on the left of $u_j$. This element
belongs to an $(\alpha,231)$-pattern in $u$: $(\ell,j,\ell')$, where $\ell'$
is the position of $u_{\ell}-1$ in $u$, which is a contradiction.

Therefore, our assumption must have been wrong, and it follows
$\Inv(u)\subseteq\Inv(v)$, thus $u\weakorder v$ by definition.
\end{proof}

Note that we never used in the previous proof that $v$ is
$(\alpha,231)$-avoiding. This makes sense thanks to
Lemma~\ref{lem:alpha_projection}: its property has no equivalent going
upwards so $u$ and $v$ do not play symmetrical roles.

\subsection{Decoding $\alpha$-codes}
\label{sec:alpha_decoding}

We proceed to prove that $\code_{\alpha}$ is a bijection from
$\Symmetric_{\alpha}(231)$ to $\Codes_{\alpha}$.
	
\begin{lemma}\label{lem:code_left_zero}
If $w\in\Symmetric_{\alpha}(231)$, then the leftmost $0$ in
$\code_{\alpha}(w)$ corresponds to the position of the $1$ in the one-line
notation of $w$.
\end{lemma}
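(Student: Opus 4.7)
The plan is to split the claim into two parts: (a) $c_p=0$, where $p$ is the position of the value $1$ in $w$, and (b) $c_j\geq 1$ for every $j<p$. I expect part (a) to be immediate from the definition: if $\indicator(p)=r$ then $c_p=0$ by default (no positions beyond the last $\alpha$-region), and otherwise $w_p=1<w_{s_{\indicator(p)}+1}$ already forces $c_p=0$.

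For part (b), the plan is to argue by contradiction and produce an $(\alpha,231)$-pattern from the data of a hypothetical $j<p$ with $c_j=0$. A preliminary observation places $a\defs\indicator(j)$ strictly before $\indicator(p)$: otherwise $j<p$ would lie in the same region and force $w_j<w_p=1$. Writing $q\defs s_a+1$ and $m\defs w_j\geq 2$, the assumption becomes $w_j<w_q$. The key idea is then to track the positions of the values $m,m-1,\ldots,1$: letting $\ell_k$ be the position of $m-k$, we have $\ell_0=j$ and $\ell_{m-1}=p$, so that there is a least $K\geq 1$ with $\ell_K>j$, and by minimality $\ell_{K-1}\leq j$. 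The candidate $(\alpha,231)$-pattern is $(\ell_{K-1},q,\ell_K)$, and its value conditions are essentially automatic: $w_{\ell_{K-1}}=w_{\ell_K}+1$ holds by construction, and $w_{\ell_{K-1}}\leq m<w_q$ gives $w_{\ell_{K-1}}<w_q$.

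The main obstacle, as I see it, is verifying that $\ell_{K-1}$, $q$, and $\ell_K$ lie in three strictly increasing $\alpha$-regions and in the positional order $\ell_{K-1}<q<\ell_K$. The inequalities $\ell_{K-1}\leq j<q$ are direct and give $\indicator(\ell_{K-1})\leq a<a+1=\indicator(q)$. The delicate point is pinning down $\indicator(\ell_K)$: using that $w$ is an $\alpha$-permutation I plan to rule out $\indicator(\ell_K)=a$ (that would place $\ell_K>j$ in the same region as $j$, forcing $w_{\ell_K}>w_j=m$ while $w_{\ell_K}=m-K<m$) and also $\indicator(\ell_K)=a+1$ (that would place $\ell_K>q$ in the region of $q$, forcing $w_{\ell_K}>w_q$ while $w_{\ell_K}<m<w_q$). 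This will give $\indicator(\ell_K)\geq a+2$, in particular $\ell_K>q$, and produce a genuine $(\alpha,231)$-pattern, contradicting $w\in\Symmetric_{\alpha}(231)$.
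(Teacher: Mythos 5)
Your proof is correct, and it reaches the same kind of contradiction as the paper --- an explicit $(\alpha,231)$-pattern whose middle entry is $w_{q}$, the first entry of the $\alpha$-region following that of the offending index $j$ --- but it locates the other two entries of the pattern differently. The paper takes $j$ to be the \emph{left-most} zero, uses condition (C2) to conclude that $w_{j}$ is the minimum of its region, and then splits into two cases according to whether the value $w_{j}-1$ occurs after or before position $j$ (in the latter case passing to the minimum of the prefix $w_{1},\ldots,w_{j}$ and the position of its predecessor). You instead walk down the chain of consecutive values $m,m-1,\ldots,1$ starting from $w_{j}=m$ and stop at the first value whose position jumps past $j$; the pair $(\ell_{K-1},\ell_{K})$ then automatically satisfies $w_{\ell_{K-1}}=w_{\ell_{K}}+1$ and $w_{\ell_{K-1}}\leq m<w_{q}$. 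This single argument replaces the paper's two-case analysis, does not require $j$ to be the left-most zero (so you prove the slightly sharper statement that \emph{no} position before that of the value $1$ carries a zero), and your region bookkeeping for $\ell_{K-1}<q<\ell_{K}$ is exactly what is needed. One small nit: when excluding $\indicator(\ell_{K})=\indicator(j)+1$ you should also allow $\ell_{K}=q$ rather than only $\ell_{K}>q$, but both subcases yield the same contradiction $w_{\ell_{K}}\geq w_{q}>m>w_{\ell_{K}}$.
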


\begin{proof}
Let $\code_{\alpha}(w)=(c_{1},c_{2},\ldots,c_{n})$, and let $j_{o}\in[n]$ be
such that $w_{j_{o}}=1$. Moreover, if $j=\min\{i\mid c_{i}=0\}$, then $j\leq
j_{o}$, since $c_{j_{o}}=0$. Let $w_{j}=a$. Since the entries in
an $\alpha$-region are ordered increasingly, $a$ is the smallest element in
its $\alpha$-region.

Now, define $m=\min(w_1,w_2,\ldots,w_j)$. Then, if $m\not=a$, since $m$ is
strictly to the left of $a$ in $w$, it cannot be $1$ either, so that we have
a $231$ pattern with the values $m$, $a$, and $m-1$ necessarily in that order
in $w$ and in different regions. Otherwise $m=a$. If $m\not=1$, the region
of $a$ cannot be the rightmost region of $w$ since $a-1$ did not appear in
this prefix of $w$.
Let $b$ be the smallest element in the
$\bigl(\indicator(j)+1\bigr)$-st $\alpha$-region, and let
$k=s_{\indicator(j)}+1$, \ie $w_{k}=b$. Since $c_{j}=0$, we have $a<b$.
We then have a $231$ pattern with the values $a$, $b$, and $a-1$.

%
%
So $a=1$ and $j=j_{o}$.
\end{proof}

\begin{table}
	\centering
	\begin{tabular}{c||c|c}
		$\alpha$ & $w$ & $\code_{\alpha}(w)$\\
		\hline\hline
		$(1)$ & \raisebox{-.1cm}{\AlphaPerm{1}{c1}{1}{.3}{.25}} & $(0)$\\
		\hline
		$(2)$ & \raisebox{-.1cm}{\AlphaPerm{2}{c1}{1,2}{.3}{.25}} & $(0,0)$\\
		$(1,1)$ & \raisebox{-.1cm}{\AlphaPerm{1,1}{c1,c2}{1,2}{.3}{.25}} & $(0,0)$\\
		& \raisebox{-.1cm}{\AlphaPerm{1,1}{c1,c2}{2,1}{.3}{.25}} & $(1,0)$\\
		\hline
		$(3)$ & \raisebox{-.1cm}{\AlphaPerm{3}{c1}{1,2,3}{.3}{.25}} & $(0,0,0)$\\
		$(2,1)$ & \raisebox{-.1cm}{\AlphaPerm{2,1}{c1,c2}{1,2,3}{.3}{.25}} & $(0,0,0)$\\
		& \raisebox{-.1cm}{\AlphaPerm{2,1}{c1,c2}{1,3,2}{.3}{.25}} & $(0,1,0)$\\
		& \raisebox{-.1cm}{\AlphaPerm{2,1}{c1,c2}{2,3,1}{.3}{.25}} & $(1,1,0)$\\
		$(1,2)$ & \raisebox{-.1cm}{\AlphaPerm{1,2}{c1,c2}{1,2,3}{.3}{.25}} & $(0,0,0)$\\
		& \raisebox{-.1cm}{\AlphaPerm{1,2}{c1,c2}{2,1,3}{.3}{.25}} & $(1,0,0)$\\
		& \raisebox{-.1cm}{\AlphaPerm{1,2}{c1,c2}{3,1,2}{.3}{.25}} & $(2,0,0)$\\
		$(1,1,1)$ & \raisebox{-.1cm}{\AlphaPerm{1,1,1}{c1,c2,c3}{1,2,3}{.3}{.25}} & $(0,0,0)$\\
		& \raisebox{-.1cm}{\AlphaPerm{1,1,1}{c1,c2,c3}{1,3,2}{.3}{.25}} & {\color{red!80!black}$(0,1,0)$}\\
		& \raisebox{-.1cm}{\AlphaPerm{1,1,1}{c1,c2,c3}{2,1,3}{.3}{.25}} & $(1,0,0)$\\
		& \raisebox{-.1cm}{\AlphaPerm{1,1,1}{c1,c2,c3}{2,3,1}{.3}{.25}} & {\color{red!80!black}$(0,1,0)$}\\
		& \raisebox{-.1cm}{\AlphaPerm{1,1,1}{c1,c2,c3}{3,1,2}{.3}{.25}} & $(2,0,0)$\\
		& \raisebox{-.1cm}{\AlphaPerm{1,1,1}{c1,c2,c3}{3,2,1}{.3}{.25}} & $(2,1,0)$\\
	\end{tabular}
	\caption{The $\alpha$-permutations for any composition $\alpha$ of $n\leq 3$ together with their corresponding $\alpha$-codes.}
	\label{tab:induction_base}
\end{table}

\begin{proposition}\label{prop:code_bijection}
For $\mathbf{c}\in\Codes_{\alpha}$ there exists a unique
$w\in\Symmetric_{\alpha}(231)$ such that $\code_{\alpha}(w)=\mathbf{c}$.
\end{proposition}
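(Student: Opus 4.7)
I would prove this by induction on $n$. The base case $n=1$ is immediate. For the inductive step, given $\mathbf{c} \in \Codes_\alpha$, Lemma~\ref{lem:code_left_zero} forces any preimage $w \in \Symmetric_\alpha(231)$ to satisfy $w_j = 1$, where $j$ is the position of the leftmost zero of $\mathbf{c}$. Condition~\eqref{it:code2} forces $j = s_{a-1}+1$ to be the first index of its region $a = \indicator(j)$. This observation delivers uniqueness as soon as existence is established at the smaller level.

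To reduce the problem, I would form the composition $\alpha'$ obtained from $\alpha$ by subtracting one from $\alpha_a$ (deleting that part if $\alpha_a = 1$), and define $\mathbf{c}' \in \mathbb{Z}^{n-1}$ by $c'_i = c_{i+1}$ for $i \geq j$, while for $i < j$ setting $c'_i = c_i - 1$ when $c_i \geq j - s_{\indicator(i)}$ and $c'_i = c_i$ otherwise. This formula is engineered so that $\mathbf{c}'$ matches the $\alpha'$-code of the permutation obtained from any preimage $w$ by removing the value $1$ and decrementing the remaining entries: indices with $c_i \geq j - s_{\indicator(i)}$ are precisely those for which $w_i$ previously saw $w_j = 1$ and so lose one unit of consecutive count upon removal of $w_j$.

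Granting $\mathbf{c}' \in \Codes_{\alpha'}$, the induction hypothesis yields a unique $w' \in \Symmetric_{\alpha'}(231)$ with $\code_{\alpha'}(w') = \mathbf{c}'$. I would then define $w$ by inserting the value $1$ at position $j$ and incrementing every entry of $w'$ by one; since $w_j = 1$ sits at the start of region $a$, this places $w$ in $\Symmetric_\alpha$, and a direct coordinate-by-coordinate check yields $\code_\alpha(w) = \mathbf{c}$. For $(\alpha, 231)$-avoidance: the roles of $i'$ and $j'$ in any pattern cannot be played by $w_j = 1$ (from the inequalities $w_{i'} < w_{j'}$ and $w_{i'} = w_{k'}+1$); a pattern whose three positions all avoid $j$ descends to an $(\alpha', 231)$-pattern in $w'$, contradicting induction; and a pattern with $k' = j$ would force $w_{i'} = 2$ and hence $w'_{i'} = 1$ at some $i' < j$, which via Lemma~\ref{lem:code_left_zero} applied to $w'$ and the explicit definition of $\mathbf{c}'$ contradicts the choice of $j$ as the leftmost zero of $\mathbf{c}$.

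The main obstacle is the verification that $\mathbf{c}' \in \Codes_{\alpha'}$. Conditions~\eqref{it:code1} and~\eqref{it:code2} transfer routinely; the delicate point is~\eqref{it:code3}, since the decrement of certain entries combined with the shifted boundaries $s'_b = s_b$ for $b < a$ and $s'_b = s_b - 1$ for $b \geq a$ interact non-trivially, and one has to appeal to~\eqref{it:code3} of $\mathbf{c}$ itself, together with a careful treatment of the edge case $\alpha_a = 1$ (where region $a$ vanishes), in order to control the new inequalities.
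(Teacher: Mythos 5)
Your proposal follows essentially the same route as the paper's proof: induction on $n$, peeling off the value $1$ at the position $j$ of the leftmost zero (which Lemma~\ref{lem:code_left_zero} and \eqref{it:code2} locate at the start of an $\alpha$-region), recursing on the reduced code over the shrunken composition $\alpha'$, and re-inserting. The only cosmetic difference is your decrement threshold $c_i \geq j - s_{\indicator(i)}$ versus the paper's $c_i \geq s_{a-1} - s_{\indicator(i)}$; these agree on the relevant codes because the boundary case $c_i = s_{a-1}-s_{\indicator(i)}$ would force $c_{s_{a-1}} = 0$ via \eqref{it:code3}, contradicting the minimality of $j$ --- and, like the paper (which calls it ``straightforward''), you leave the verification that $\mathbf{c}' \in \Codes_{\alpha'}$ to the reader.
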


\begin{proof}
We proceed by induction on $n$. For $n\leq 3$, the claim can be checked
directly (see Table~\ref{tab:induction_base}), which establishes the induction
base. Assume that the claim holds for all compositions of $n'<n$.
	
Let $\mathbf{c}=(c_{1},c_{2},\ldots,c_{n})\in\Codes_{\alpha}$.  By definition,
$c_{n}=0$, which enables us to define $j_{o}=\min\bigl\{j\in[n]\mid
c_{j}=0\bigr\}$.  By \eqref{it:code2}, $j_{o}=s_{a-1}+1$ for some $a\in[r]$,
meaning that $j_{o}$ is the first element in the $a\th$ $\alpha$-region.
	
Let $\alpha'=(\alpha'_{1},\alpha'_{2},\ldots,\alpha'_{r'})$ be the unique
composition of $n-1$ which is obtained by subtracting $1$ from $\alpha_{a}$.
(If $\alpha_{a}=1$, then we simply remove this part.)  We define
$s'_{a}=\alpha'_{1}+\alpha'_{2}+\cdots+\alpha'_{a}$, and we obtain

\begin{displaymath}
s'_{b} = \begin{cases}s_{b}, & \text{if}\;b<a,\\ s_{b}-1, & \text{if}\;b\geq a.\end{cases}
\end{displaymath}
We define $\mathbf{c}'=(c'_{1},c'_{2},\ldots,c'_{n-1})$ by setting
\begin{displaymath}
c'_{i} = 
\begin{cases}
c_{i}, & i<j_{o}\;\text{and}\;c_{i}<s_{a-1}-s_{\indicator_{\alpha}(i)},\\
c_{i}-1, & i<j_{o}\;\text{and}\;c_{i}\geq s_{a-1}-s_{\indicator_{\alpha}(i)},\\
c_{i+1}, & i\geq j_{o}.
\end{cases}
\end{displaymath}
It is straightforward to check that $\mathbf{c}'\in\Codes_{\alpha'}$.  By
induction hypothesis, there exists a unique $w'\in\Symmetric_{\alpha'}(231)$
with $\code_{\alpha'}(w')=\mathbf{c}'$.
	
We now ``inject'' $1$ into $w'$ to construct a permutation
$w\in\Symmetric_{n}$ via
	\begin{displaymath}
		w_{i} =
		\begin{cases}
			w'_{i}+1, & \text{if}\;i<j_{o},\\
			1, & \text{if}\;i=j_{o},\\
			w'_{i-1}+1, & \text{if}\;i>j_{o}.
		\end{cases}
	\end{displaymath}
Since $j_{o}$ is the first element in the $a\th$ $\alpha$-region, it follows
that $w\in\Symmetric_{\alpha}$.  Assume that $w$ has an $(\alpha,231)$-pattern
$(i,j,k)$.  Since $w'\in\Symmetric_{\alpha'}(231)$, it must be that $k=j_{o}$,
and $w_{i}=2$.  By construction, $w'_{i}=1$, implying that $c'_{i}=0$.  Since
$i<j_{o}$, it follows that $c_{i}=0$, contradicting the choice of $j_{o}$.
Thus, $w\in\Symmetric_{\alpha}(231)$.  By construction, it follows that $w$ is
the only $(\alpha,231)$-avoiding permutation with
$\code_{\alpha}(w)=\mathbf{c}$.
\end{proof}

\begin{figure}
	\centering
	\begin{tikzpicture}\small
		\def\x{1};
		\def\y{.8};
		\draw(1*\x,8*\y) node{$({\color{red!80!black}\mathbf{2}},{\color{red!80!black}\mathbf{6}},0,1,3,1,1,0)$};
			\draw(.57*\x,7.5*\y) node{\tiny $\uparrow$};
		\draw(1*\x,7*\y) node{$(1,{\color{red!80!black}\mathbf{5}},\ndash,1,{\color{red!80!black}\mathbf{3}},{\color{red!80!black}\mathbf{1}},{\color{red!80!black}\mathbf{1}},0)$};
			\draw(2.03*\x,6.5*\y) node{\tiny $\uparrow$};
		\draw(1*\x,6*\y) node{$(1,{\color{red!80!black}\mathbf{4}},\ndash,{\color{red!80!black}\mathbf{1}},{\color{red!80!black}\mathbf{2}},0,0,\ndash)$};
			\draw(1.44*\x,5.5*\y) node{\tiny $\uparrow$};
		\draw(1*\x,5*\y) node{$({\color{red!80!black}\mathbf{1}},{\color{red!80!black}\mathbf{3}},\ndash,0,1,\ndash,0,\ndash)$};
			\draw(.86*\x,4.5*\y) node{\tiny $\uparrow$};
		\draw(1*\x,4*\y) node{$(0,2,\ndash,\ndash,1,\ndash,0,\ndash)$};
			\draw(-.02*\x,3.5*\y) node{\tiny $\uparrow$};
		\draw(1*\x,3*\y) node{$(\ndash,{\color{red!80!black}\mathbf{2}},\ndash,\ndash,{\color{red!80!black}\mathbf{1}},\ndash,0,\ndash)$};
			\draw(1.73*\x,2.5*\y) node{\tiny $\uparrow$};
		\draw(1*\x,2*\y) node{$(\ndash,{\color{red!80!black}\mathbf{1}},\ndash,\ndash,0,\ndash,\ndash,\ndash)$};
			\draw(1.14*\x,1.5*\y) node{\tiny $\uparrow$};
		\draw(1*\x,1*\y) node{$(\ndash,0,\ndash,\ndash,\ndash,\ndash,\ndash,\ndash)$};
			\draw(.27*\x,.5*\y) node{\tiny $\uparrow$};
		\draw(3*\x,8*\y) node{$\rightarrow$};
		\draw(3*\x,7*\y) node{$\rightarrow$};
		\draw(3*\x,6*\y) node{$\rightarrow$};
		\draw(3*\x,5*\y) node{$\rightarrow$};
		\draw(3*\x,4*\y) node{$\rightarrow$};
		\draw(3*\x,3*\y) node{$\rightarrow$};
		\draw(3*\x,2*\y) node{$\rightarrow$};
		\draw(3*\x,1*\y) node{$\rightarrow$};
		\draw(5*\x,8.03*\y) node{\AlphaPerm{2,3,2,1}{c1,c2,c3,c4}{,,1,,,,,}{.3}{.25}};
		\draw(5*\x,7.03*\y) node{\AlphaPerm{2,3,2,1}{c1,c2,c3,c4}{,,1,,,,,2}{.3}{.25}};
		\draw(5*\x,6.03*\y) node{\AlphaPerm{2,3,2,1}{c1,c2,c3,c4}{,,1,,,3,,2}{.3}{.25}};
		\draw(5*\x,5.03*\y) node{\AlphaPerm{2,3,2,1}{c1,c2,c3,c4}{,,1,4,,3,,2}{.3}{.25}};
		\draw(5*\x,4.03*\y) node{\AlphaPerm{2,3,2,1}{c1,c2,c3,c4}{5,,1,4,,3,,2}{.3}{.25}};
		\draw(5*\x,3.03*\y) node{\AlphaPerm{2,3,2,1}{c1,c2,c3,c4}{5,,1,4,,3,6,2}{.3}{.25}};
		\draw(5*\x,2.03*\y) node{\AlphaPerm{2,3,2,1}{c1,c2,c3,c4}{5,,1,4,7,3,6,2}{.3}{.25}};
		\draw(5*\x,1.03*\y) node{\AlphaPerm{2,3,2,1}{c1,c2,c3,c4}{5,8,1,4,7,3,6,2}{.3}{.25}};
		\begin{pgfonlayer}{background}
			\foreach \a in {0,1,...,7}{
				\draw[thick,c1](0*\x,\a.8*\y) -- (0*\x,\a.7*\y) -- (.25*\x,\a.7*\y) -- (.25*\x,\a.8*\y);
				\draw[thick,c2](.55*\x,\a.8*\y) -- (.55*\x,\a.7*\y) -- (1.15*\x,\a.7*\y) -- (1.15*\x,\a.8*\y);
				\draw[thick,c3](1.45*\x,\a.8*\y) -- (1.45*\x,\a.7*\y) -- (1.7*\x,\a.7*\y) -- (1.7*\x,\a.8*\y);
				\draw[thick,c4](2*\x,\a.8*\y) -- (2*\x,\a.7*\y);
			}
		\end{pgfonlayer}
	\end{tikzpicture}
	\caption{Decoding the $(2,3,2,1)$-code $(2,6,0,1,3,1,1,0)$.  The arrows indicate the left-most zero in each step; the red digits indicate the positions that see the left-most zero.}
	\label{fig:alpha_decoding}
\end{figure}
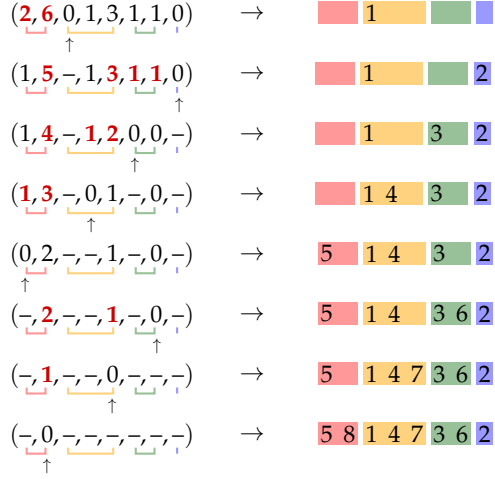

Figure~\ref{fig:alpha_decoding} illustrates the procedure described in the
proof of Proposition~\ref{prop:code_bijection}.  We may now conclude the proof
of our first main theorem.

\begin{proof}[Proof of Theorem~\ref{thm:alpha_tamari_lattice_codes}]
Proposition~\ref{prop:code_bijection} establishes that
$\Symmetric_{\alpha}(231)$ and $\Codes_{\alpha}$ are in bijection, and
Corollary~\ref{cor:code_comparison_necessary} and
Lemma~\ref{lem:code_comparison_sufficient} establish that for
$u,v\in\Symmetric_{\alpha}(231)$ we have $u\weakorder v$ if, and only if
$\code_{\alpha}(u)\comporder\code_{\alpha}(v)$.  This finishes the proof.
\end{proof}

In fact, the preimages of the maps
$\code_{\alpha}\colon\Symmetric_{\alpha}\to\Codes_{\alpha}$ and
$\proj_{\alpha}\colon\Symmetric_{\alpha}\to\Symmetric_{\alpha}(231)$
coincide.

\begin{lemma}\label{lem:code_congruence}
For $u,v\in\Symmetric_{\alpha}$ we have $\code_{\alpha}(u)=\code_{\alpha}(v)$
if, and only if $\proj_{\alpha}(u)=\proj_{\alpha}(v)$.
\end{lemma}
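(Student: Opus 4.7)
The plan is to bootstrap from Lemma~\ref{lem:code_steps}, which already handles a single cover relation, via the auxiliary claim that for every $u\in\Symmetric_{\alpha}$ one has $\code_{\alpha}(u)=\code_{\alpha}(\proj_{\alpha}(u))$. Once this claim is in hand, both directions of the stated equivalence follow almost formally.

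To establish the auxiliary claim, I would pick any saturated chain
$\proj_{\alpha}(u)=w_{0}\lessdot_{L}w_{1}\lessdot_{L}\cdots\lessdot_{L}w_{m}=u$
in $(\Symmetric_{\alpha},\weakorder)$, which exists since $\proj_{\alpha}(u)\weakorder u$ by Lemma~\ref{lem:alpha_projection}. I would then show that $\proj_{\alpha}(w_{i})=\proj_{\alpha}(u)$ for every $i$. Both inclusions follow directly from the maximality property in Lemma~\ref{lem:alpha_projection}: since $\proj_{\alpha}(u)\in\Symmetric_{\alpha}(231)$ and $\proj_{\alpha}(u)\weakorder w_{i}$, the defining property of $\proj_{\alpha}(w_{i})$ yields $\proj_{\alpha}(u)\weakorder \proj_{\alpha}(w_{i})$; conversely, $\proj_{\alpha}(w_{i})\in\Symmetric_{\alpha}(231)$ with $\proj_{\alpha}(w_{i})\weakorder w_{i}\weakorder u$, hence $\proj_{\alpha}(w_{i})\weakorder \proj_{\alpha}(u)$. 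With this constancy, the ``moreover'' clause of Lemma~\ref{lem:code_steps} applied to each cover $w_{i-1}\lessdot_{L}w_{i}$ gives $\code_{\alpha}(w_{i-1})=\code_{\alpha}(w_{i})$, and telescoping delivers $\code_{\alpha}(u)=\code_{\alpha}(\proj_{\alpha}(u))$.

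Finishing the lemma from here is immediate. If $\proj_{\alpha}(u)=\proj_{\alpha}(v)$, the auxiliary claim yields $\code_{\alpha}(u)=\code_{\alpha}(\proj_{\alpha}(u))=\code_{\alpha}(\proj_{\alpha}(v))=\code_{\alpha}(v)$. Conversely, if $\code_{\alpha}(u)=\code_{\alpha}(v)$, then $\code_{\alpha}(\proj_{\alpha}(u))=\code_{\alpha}(\proj_{\alpha}(v))$, and since both projections lie in $\Symmetric_{\alpha}(231)$, the injectivity of $\code_{\alpha}$ on $\Symmetric_{\alpha}(231)$ guaranteed by Proposition~\ref{prop:code_bijection} forces $\proj_{\alpha}(u)=\proj_{\alpha}(v)$. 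I do not expect any real obstacle; the only mildly delicate point is the constancy of $\proj_{\alpha}$ on the interval $[\proj_{\alpha}(u),u]$, but this is a direct consequence of the maximality characterization in Lemma~\ref{lem:alpha_projection} and needs no appeal to the deeper lattice-quotient structure of $\Tamari_{\alpha}$.
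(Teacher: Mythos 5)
Your proof is correct, and it is organized differently from the paper's. You isolate the single auxiliary claim $\code_{\alpha}(u)=\code_{\alpha}\bigl(\proj_{\alpha}(u)\bigr)$, prove it by observing that $\proj_{\alpha}$ is constant on the interval $[\proj_{\alpha}(u),u]$ (a correct and direct consequence of the maximality in Lemma~\ref{lem:alpha_projection}), and then obtain both implications formally from this claim together with the injectivity of $\code_{\alpha}$ on $\Symmetric_{\alpha}(231)$ from Proposition~\ref{prop:code_bijection}. The paper instead splits into the cases where $u,v$ are comparable (handled by repeated application of Lemma~\ref{lem:code_steps}) and incomparable; in the latter case the forward implication is routed through the meet $w=u\wedge_{L}v$, using that $(\Symmetric_{\alpha},\weakorder)$ is a lattice, while the backward implication uses essentially your auxiliary claim plus Proposition~\ref{prop:code_bijection}. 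Your route buys two small things: it avoids any appeal to the lattice structure of $(\Symmetric_{\alpha},\weakorder)$ (only the existence of saturated chains in a finite poset is needed), and it makes explicit the constancy-of-$\proj_{\alpha}$ step that the paper leaves implicit inside ``repeated application of Lemma~\ref{lem:code_steps}.'' The paper's meet-based argument, on the other hand, shows directly how the congruence classes of $\proj_{\alpha}$ sit as intervals closed under meets, which is conceptually aligned with the quotient-lattice viewpoint of Theorem~\ref{thm:alpha_tamari_lattice}. Either proof is complete.
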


\begin{proof}
Let $u,v\in\Symmetric_{\alpha}$.  Let
$\code_{\alpha}(u)=(a_{1},a_{2},\ldots,a_{n})$ and
$\code_{\alpha}(v)=(b_{1},b_{2},\ldots,b_{n})$.  
	
If $u\weakorder v$, then the desired equivalence follows from repeated
application of Lemma~\ref{lem:code_steps}.  
	
Otherwise, $u$ and $v$ are incomparable.  By
\cite{bjorner88generalized}*{Theorem~4.1}, $(\Symmetric_{\alpha},\weakorder)$
is a lattice and thus the meet $w=u\wedge_{L}v$ exists and satisfies
$w\weakorder u$ and $w\weakorder v$.  If
$\proj_{\alpha}(u)=\proj_{\alpha}(v)$, then
$\proj_{\alpha}(u)=\proj_{\alpha}(w)$, by Lemma~\ref{lem:alpha_projection}.
It follows that $\code_{\alpha}(u)=\code_{\alpha}(w)=\code_{\alpha}(v)$ by
Lemma~\ref{lem:code_steps}.
	
Conversely, let $\code_{\alpha}(u)=\code_{\alpha}(v)$.
Lemma~\ref{lem:alpha_projection} implies $\proj_{\alpha}(u)\weakorder u$ and
$\proj_{\alpha}(v)\weakorder v$.  In view of the previous reasoning we find
$\code_{\alpha}\bigl(\proj_{\alpha}(u)\bigr)=\code_{\alpha}(u)=\code_{\alpha}(v)=\code_{\alpha}\bigl(\proj_{\alpha}(v)\bigr)$.
Proposition~\ref{prop:code_bijection} thus implies
$\proj_{\alpha}(u)=\proj_{\alpha}(v)$.
\end{proof}

Figure~\ref{fig:121_tamari_lattice} shows the weak order on
$\Symmetric_{(1,2,1)}$ with the preimages of the map $\proj_{(1,2,1)}$
indicated; the bottom elements per highlighted region are exactly the
$\bigl((1,2,1),231\bigr)$-avoiding permutations.  The elements are labeled by
their corresponding $(1,2,1)$-codes, too.

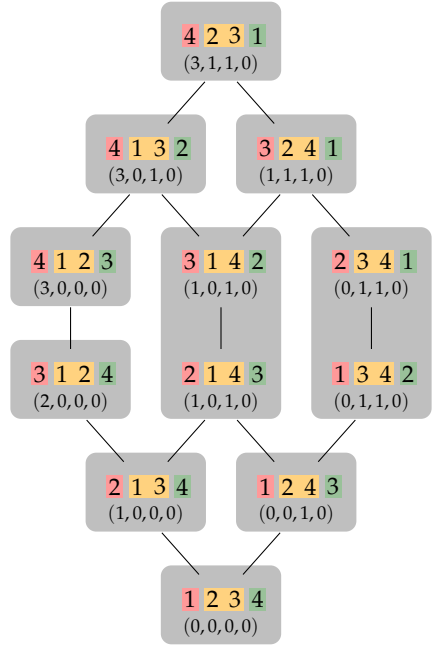
\begin{figure}
	\centering
	\begin{tikzpicture}\small
		\def\x{2};
		\def\y{1.5};
		\draw(2*\x,1*\y) node(n1){\AlphaPerm{1,2,1}{c1,c2,c3}{1,2,3,4}{.3}{.25}};
			\draw(2*\x,.75*\y) node(nl1){\scriptsize $(0,0,0,0)$};
		\draw(1.5*\x,2*\y) node(n2){\AlphaPerm{1,2,1}{c1,c2,c3}{2,1,3,4}{.3}{.25}};
			\draw(1.5*\x,1.75*\y) node(nl2){\scriptsize $(1,0,0,0)$};
		\draw(2.5*\x,2*\y) node(n3){\AlphaPerm{1,2,1}{c1,c2,c3}{1,2,4,3}{.3}{.25}};
			\draw(2.5*\x,1.75*\y) node(nl3){\scriptsize $(0,0,1,0)$};
		\draw(1*\x,3*\y) node(n4){\AlphaPerm{1,2,1}{c1,c2,c3}{3,1,2,4}{.3}{.25}};
			\draw(1*\x,2.75*\y) node(nl4){\scriptsize $(2,0,0,0)$};
		\draw(2*\x,3*\y) node(n5){\AlphaPerm{1,2,1}{c1,c2,c3}{2,1,4,3}{.3}{.25}};
			\draw(2*\x,2.75*\y) node(nl5){\scriptsize $(1,0,1,0)$};
		\draw(3*\x,3*\y) node(n6){\AlphaPerm{1,2,1}{c1,c2,c3}{1,3,4,2}{.3}{.25}};
			\draw(3*\x,2.75*\y) node(nl6){\scriptsize $(0,1,1,0)$};
		\draw(1*\x,4*\y) node(n7){\AlphaPerm{1,2,1}{c1,c2,c3}{4,1,2,3}{.3}{.25}};
			\draw(1*\x,3.75*\y) node(nl7){\scriptsize $(3,0,0,0)$};
		\draw(2*\x,4*\y) node(n8){\AlphaPerm{1,2,1}{c1,c2,c3}{3,1,4,2}{.3}{.25}};
			\draw(2*\x,3.75*\y) node(nl8){\scriptsize $(1,0,1,0)$};
		\draw(3*\x,4*\y) node(n9){\AlphaPerm{1,2,1}{c1,c2,c3}{2,3,4,1}{.3}{.25}};
			\draw(3*\x,3.75*\y) node(nl9){\scriptsize $(0,1,1,0)$};
		\draw(1.5*\x,5*\y) node(n10){\AlphaPerm{1,2,1}{c1,c2,c3}{4,1,3,2}{.3}{.25}};
			\draw(1.5*\x,4.75*\y) node(nl10){\scriptsize $(3,0,1,0)$};
		\draw(2.5*\x,5*\y) node(n11){\AlphaPerm{1,2,1}{c1,c2,c3}{3,2,4,1}{.3}{.25}};
			\draw(2.5*\x,4.75*\y) node(nl11){\scriptsize $(1,1,1,0)$};
		\draw(2*\x,6*\y) node(n12){\AlphaPerm{1,2,1}{c1,c2,c3}{4,2,3,1}{.3}{.25}};
			\draw(2*\x,5.75*\y) node(nl12){\scriptsize $(3,1,1,0)$};
		\draw(n1) -- (nl2);
		\draw(n1) -- (nl3);
		\draw(n2) -- (nl4);
		\draw(n2) -- (nl5);
		\draw(n3) -- (nl5);
		\draw(n3) -- (nl6);
		\draw(n4) -- (nl7);
		\draw(n5) -- (nl8);
		\draw(n6) -- (nl9);
		\draw(n7) -- (nl10);
		\draw(n8) -- (nl10);
		\draw(n8) -- (nl11);
		\draw(n9) -- (nl11);
		\draw(n10) -- (nl12);
		\draw(n11) -- (nl12);
		\begin{pgfonlayer}{background}
			\fill[gray!50!white,rounded corners](1.6*\x,.6*\y) -- (2.4*\x,.6*\y) -- (2.4*\x,1.3*\y) -- (1.6*\x,1.3*\y) -- cycle;
			\fill[gray!50!white,rounded corners](1.1*\x,1.6*\y) -- (1.9*\x,1.6*\y) -- (1.9*\x,2.3*\y) -- (1.1*\x,2.3*\y) -- cycle;
			\fill[gray!50!white,rounded corners](2.1*\x,1.6*\y) -- (2.9*\x,1.6*\y) -- (2.9*\x,2.3*\y) -- (2.1*\x,2.3*\y) -- cycle;
			\fill[gray!50!white,rounded corners](.6*\x,2.6*\y) -- (1.4*\x,2.6*\y) -- (1.4*\x,3.3*\y) -- (.6*\x,3.3*\y) -- cycle;
			\fill[gray!50!white,rounded corners](.6*\x,3.6*\y) -- (1.4*\x,3.6*\y) -- (1.4*\x,4.3*\y) -- (.6*\x,4.3*\y) -- cycle;
			\fill[gray!50!white,rounded corners](1.6*\x,2.6*\y) -- (2.4*\x,2.6*\y) -- (2.4*\x,4.3*\y) -- (1.6*\x,4.3*\y) -- cycle;
			\fill[gray!50!white,rounded corners](2.6*\x,2.6*\y) -- (3.4*\x,2.6*\y) -- (3.4*\x,4.3*\y) -- (2.6*\x,4.3*\y) -- cycle;
			\fill[gray!50!white,rounded corners](1.1*\x,4.6*\y) -- (1.9*\x,4.6*\y) -- (1.9*\x,5.3*\y) -- (1.1*\x,5.3*\y) -- cycle;
			\fill[gray!50!white,rounded corners](2.1*\x,4.6*\y) -- (2.9*\x,4.6*\y) -- (2.9*\x,5.3*\y) -- (2.1*\x,5.3*\y) -- cycle;
			\fill[gray!50!white,rounded corners](1.6*\x,5.6*\y) -- (2.4*\x,5.6*\y) -- (2.4*\x,6.3*\y) -- (1.6*\x,6.3*\y) -- cycle;
		\end{pgfonlayer}
	\end{tikzpicture}
	\caption{The weak order on $\Symmetric_{(1,2,1)}$, where the permutations are labeled by their $(1,2,1)$-codes.}
	\label{fig:121_tamari_lattice}
\end{figure}

\section{Lattice paths and various Tamari lattices}
\label{sec:lattice_paths}

Among other things, \cite{muehle19tamari} introduces a bijection $\Theta$ from $\Symmetric_{\alpha}(231)$ to a certain family of northeast paths, denoted by $\Paths_{\nu_{\alpha}}$.  This map was simplified in \cite{ceballos20the} and used to show that the lattice $\Tamari_{\alpha}$ is isomorphic to a certain lattice on $\Paths_{\nu_{\alpha}}$, denoted by $\Tamari_{\nu_{\alpha}}$~\cite{ceballos20the}*{Theorem~II}.  The proof of this result is only partially bijective, and relies on structural properties of both lattices.  

Our parabolic BW-codes will eventually enable us to show that $\Theta$ is in fact an anti-isomorphism from $\Tamari_{\alpha}$ to $\Tamari_{\flip(\nu_{\alpha})}$, where $\flip(\nu_{\alpha})$ is essentially the northeast path $\nu_{\alpha}$ read backwards.  

In order to prove this result, we will take a detour through certain families of lattice paths on $\mathbb{N}\times\mathbb{N}$.  Even though the objects presented here and the bijections relating them mostly belong to the combinatorial folklore and are known to many combinatorialists, we shall present them in form of a unified framework in which their definitions and main properties fit together fairly.

%

\subsection{Dyck paths and the ordinary Tamari lattice}

We first consider \defn{up steps} (of the form $U\defs(1,1)$) and \defn{down
steps} (of the form $D\defs(1,-1)$).  A \defn{Dyck path} of semilength $n$ is
a lattice path using only up and down steps, starting and ending on the
$x$-axis and never going below it.  Consequently, any Dyck path uses as many
up steps as it uses down steps.  Let $\Dyck_{n}$ denote the set of Dyck paths
of semilength $n$.  The \defn{ordinate} of a lattice point on a Dyck path is
simply the value of its $y$-coordinate.

If $P\in\Dyck_{n}$, then any up step $U$ on $P$ has a \defn{matching} down
step: this is the first down step $D$ on $P$ whose starting point has the same
ordinate as the ending point of $U$.  In particular, the portion of $P$
strictly between $U$ and $D$ is a Dyck path (of strictly smaller semilength)
in its own right.

A \defn{valley} of $P$ is a lattice point $V$ on $P$ preceded by a down step
and followed by an up step.  The \defn{rotation} of $P$ by a valley $V$ is the
Dyck path $P'\in\Dyck_{n}$ obtained by swapping the down step before $V$ and
the portion of $P$ (weakly) between the up step after $V$ and its matching
down step.  The reflexive and transitive closure of this operation yields a
partial order on $\Dyck_{n}$.

The set of Dyck paths ordered by this \defn{rotation order} forms the
(ordinary) \defn{Tamari lattice} $\Tamari_{n}$, first described in
\cite{tamari62algebra}.

\begin{remark}\label{rem:ordinary_tamari_as_alpha_tamari}
It may not be immediately clear from this definition, but the ordinary Tamari
lattice is a particular instance of an $\alpha$-Tamari lattice (see
Section~\ref{sec:weak_order}), namely when $\alpha=(1,1,\ldots,1)$; see
\cite{bjorner97shellable}*{Section~9}.
\end{remark}

\subsection{Northeast paths and the $\nu$-Tamari lattice}

Now, we consider \defn{north steps} (of the form $N\defs(0,1)$) and \defn{east
steps} (of the form $E\defs(1,0)$).  A \defn{northeast path} of length $n$ is
a lattice path using $k$ north and $n-k$ east steps which starts on the
$x$-axis.  The \defn{height} of a lattice point on a northeast path is the
value of its $y$-coordinate.  A \defn{valley} of a northeast path $\mu$ is a
lattice point $V$ on $\mu$ preceded by an east step and followed by a north
step.

Given a northeast path $\nu$, a \defn{$\nu$-path} is a northeast path which
shares the starting and ending points with $\nu$ and never goes below $\nu$.
Let $\Paths_{\nu}$ denote the set of $\nu$-paths.  The \defn{horizontal
distance} of a lattice point $Q$ on $\mu\in\Paths_{\nu}$, denoted by
$\horiz_{\nu}(Q)$, is the largest number of east steps that can be added to
$Q$ without crossing to the other side of $\nu$.

\begin{figure}
  \centering
  \includegraphics[page=3, width=0.8\textwidth]{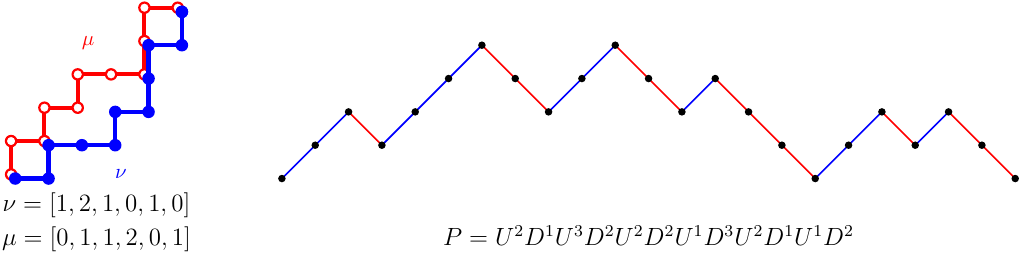}
  \caption{Example of horizontal distance and cover relation in $\Tamari_\nu$ for northeast paths.}
  \label{fig:nu-tamari-cover}
\end{figure}

The \defn{rotation} of $\mu$ by a valley $V$ is the path $\mu'\in\Paths_{\nu}$
obtained by exchanging the east step before $V$ with the portion of $\mu$
between $V$ and the next lattice point $W$ on $\mu$ satisfying
$\horiz_{\nu}(V)=\horiz_{\nu}(W)$.  In this situation, we write
$\mu\lessdot_{\nu}\mu'$.  See Figure~\ref{fig:nu-tamari-cover} for an
illustration.  The reflexive and transitive closure of this operation yields a
partial order on $\Paths_{\nu}$.

For any northeast path $\nu$, the set of $\nu$-paths ordered by this
\defn{rotation order} forms the \defn{$\nu$-Tamari lattice} $\Tamari_{\nu}$
introduced in \cite{preville17enumeration}.

\begin{remark}
Note that the definition that $\Tamari_{\nu}\cong\Tamari_{n}$ when
$\nu=(NE)^{n}$. The isomorphism is given by substituting north steps by up
steps and east steps by down steps.
\end{remark}

In \cite{ceballos20nu}, it was shown that the $\nu$-Tamari lattice can be
realized using the componentwise order on so-called $\nu$-bracket vectors.  If
$\nu$ is a northeast path of length $n$, then the \defn{minimal $\nu$-bracket
vector} is the vector $\bvec^{\min}$ consisting of $n+1$ entries, whose $i$-th
entry is the height of the $i$-th lattice point on $\nu$.  If $\nu$ has $k$
north steps, then the \defn{fixed positions} are the entries
$f_{0},f_{1},\ldots,f_{k}$, where $f_{i}$ is the position of the last
appearence of $i$ in $\bvec^{\min}$.  An integer vector $\bvec$ with $n+1$
entries is a \defn{$\nu$-bracket vector}, if it has the following three
properties:

\begin{description}
\item[B1\label{it:bvec1}] for $0 \leq s \leq k$, we have $\bvec(f_s) = s$;
\item[B2\label{it:bvec2}] for $1 \leq i \leq n+1$, we have $\bvec^{\min}(i) \leq \bvec(i) \leq k$;
\item[B3\label{it:bvec3}] if $\bvec(i) = s$, then for all $j$ with $i < j < f_s$, we have $\bvec(j) \leq s$.
\end{description}
The set of $\nu$-bracket vectors is denoted by $\bvecset_{\nu}$. The following theorem was proven by means of an explicit bijection in \cite{ceballos20nu}*{Section~4}.


\begin{theorem}[\cite{ceballos20nu}*{Theorem~21}]
\label{thm:nu_tamari_bracket_vectors}
For any northeast path $\nu$, the $\nu$-Tamari lattice $\Tamari_{\nu}$ is
isomorphic to $\bigl(\bvecset_{\nu},\comporder\bigr)$.
\end{theorem}

\subsection{$\nu$-Tamari lattices are intervals of ordinary Tamari lattices}
	\label{sec:nu_tamari_intervals}
It was shown in \cite{preville17enumeration}*{Theorem~3} in terms of binary
trees that every $\nu$-Tamari lattice is isomorphic to an interval in some
ordinary Tamari lattice. More precisely, the bijection in \cite[Section~2~and~3]{preville17enumeration} from binary trees to pairs of non-crossing lattice paths is done by a double reading of a word obtained from the contour of the binary tree, and the reverse direction is more complicated, involving a so-called ``push-gliding'' algorithm. Later, this bijection is transplanted from binary trees to Dyck paths in \cite[Section~2]{fang2017the}, but only the direction from Dyck paths to pairs of non-crossing lattice paths, not the reverse direction, and without proof. Furthermore, the bijection given in \cite{fang2017the} needs to keep track of how pairs of peaks appear in sub-paths of the Dyck path, making it seem non-trivial and hard to reverse.

We now propose a reformulation of the same bijection, given in both ways, along with proofs of equivalence to the original bijection. Our reformulation is much simpler than the original ones, as the bijectivity is trivial, and does not require any complicated algorithm in both directions.

Clearly, a northeast path is uniquely determined by the lengths of the
\defn{runs} of east steps at each height. In other words, if $\nu$ is a
northeast path, then we can write it uniquely as
\begin{displaymath}
	\nu = E^{a_{0}} N E^{a_{1}} N \cdots E^{a_{k-1}} N E^{a_{k}}.
\end{displaymath}
By abuse of notation we will also write $\nu=[a_{0},a_{1},\ldots,a_{k}]$.
Now, if $\mu\in\Paths_{\nu}$ with $\mu=[b_{0},b_{1},\ldots,b_{k}]$, then we
have $\sum_{i=0}^k a_i = \sum_{i=0}^k b_i = m$ and $\sum_{i=0}^j a_i \geq
\sum_{i=0}^j b_i$ for all $0 \leq j \leq k$. We may have $a_i=0$ or $b_i=0$
for some indices $i$. 

\begin{construction}
\label{constr:dyck-bij}
Given a northeast path $\nu$ composed of $k$ north steps and $n-k$ east steps,
let $\mu \in \Paths_\nu$ such that $\nu = [a_0, a_1, \ldots, a_k]$ and $\mu =
[b_0, b_1, \ldots, b_k]$.  We define the Dyck path $\ddyck(\nu, \mu)$ of
semilength $n+1$ by
\begin{displaymath}
\ddyck(\nu, \mu) \defs  U^{a_0 + 1} D^{b_0 + 1} \cdots U^{a_k + 1} D^{b_k + 1}.
\end{displaymath}
	
Conversely, if $P\in\Dyck_{n+1}$ we can recover the pair $(\nu,\mu)$
satisfying $P=\ddyck(\nu,\mu)$ by looking at the lengths of the runs of up and
down steps in $P$, which determine the $a_{i}$'s and $b_{i}$'s, respectively.
\end{construction}

The map $\ddyck$ is a bijection from $\Dyck_{n+1}$ to the set 
\begin{displaymath}
	\bigl\{(\nu,\mu)\mid \nu\;\text{has length}\;n\;\text{and}\;\mu\in\Paths_{\nu}\bigr\},
\end{displaymath}
and is illustrated in Figure~\ref{fig:nutodyck}. The map $\ddyck$ is simple and clearly bijective, a quality absent from previous formulations.

\begin{figure}
	\centering
	\includegraphics[page=1,width=\textwidth]{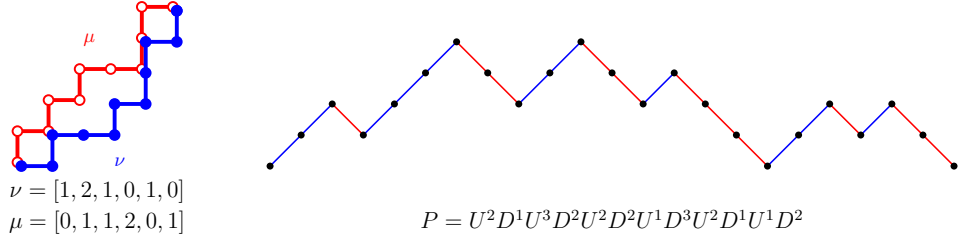}
	\caption{An example of the bijection $\ddyck$.} \label{fig:nutodyck}
\end{figure}

\begin{proposition}
\label{prop:nu-tamari-cover}
Let $\mu, \mu'\in\Paths_\nu$, with $\nu = [a_0, a_{1},\ldots, a_k]$,
$\mu = [b_0, b_{1},\ldots, b_k]$ and $\mu' = [b_0', b'_{1},\ldots, b_k']$.
Then $\mu\lessdot_{\nu}\mu'$ if, and only if $b_i = b_i'$ except for two
indices $\ell < m$ such that $b_\ell > 0$ and that $m$ is the first index
after $\ell$ satisfying $\sum_{i = \ell+1}^m (b_i - a_i) \geq 0$. In this
case, we have $b_\ell' = b_\ell - 1$ and $b_m' = b_m + 1$.
\end{proposition}

\begin{proof}
We observe that the minimum of $\horiz_\nu$ for points with the same height
occurs at the rightmost point. For any index $0 \leq j \leq k$, let $V_j$ be
the rightmost point of height $j$ on $\mu$. We have $\horiz_\nu(V_j) = \sum_{i
= 0}^j a_i - \sum_{i=0}^j b_i$.

Assume that $\mu'$ covers $\mu$ in $\Tamari_\nu$, with $V$ the valley of
$\mu$ leading to this covering relation, and $W$ the first lattice point after
$V$ with $\horiz_\nu(W) = \horiz_\nu(V)$. Let $\ell$ (resp. $m$) be the height
of $V$ (resp. $W$). We have $V = V_\ell$, also $b_\ell > 0$ as $V$ is a
valley. By the definition of $W$, we know that $\ell < m$, $\horiz_\nu(V) =
\horiz_\nu(W) \geq \horiz_\nu(V_m)$, and $\horiz_\nu(V) < \horiz_\nu(V_{m'})$
for any $m'$ such that $\ell < m' < m$. Therefore, $m$ is the first index such
that $\ell < m$ and $\horiz_\nu(V_\ell) \geq \horiz_\nu(V_m)$. In other
words,
\begin{displaymath}
0\leq \horiz_{\nu}(V_{\ell})-\horiz_{\nu}(V_{m}) =
\sum_{i=0}^{\ell}(a_{i}-b_{i}) - \sum_{i=0}^{m}(a_{i}-b_{i}) =
\sum_{i=\ell+1}^{m}(a_{i}-b_{i}).
\end{displaymath}
In this case, we also have
\begin{displaymath}
	\mu' = [b_0, \ldots, b_{\ell-1}, b_\ell + 1, b_{\ell+1}, \ldots, b_{m-1}, b_m - 1, b_{m+1}, \ldots, b_k]
\end{displaymath}
as desired.

Conversely, suppose that we have two indices $\ell$ and $m$ satisfying the
given conditions. Let $V=V_\ell$ on $\mu$ and $W$ be the first lattice point
after $V$ with $\horiz_\nu(W) = \horiz_\nu(V)$. By the conditions that $m$
satisfies, for any $m'$ such that $\ell < m' < m$, we must have
$\horiz_\nu(V_\ell) < \horiz_\nu(V_{m'})$, meaning that the height of $W$ is
at least $m$. We also observe that the horizontal distance of the leftmost
point of height $m$ is $\horiz_\nu(V_{m-1}) + a_m > \horiz_\nu(V_\ell)$, while
$\horiz_\nu(V_m) \leq \horiz_\nu(V_\ell)$ for $V_m$ the rightmost point of
height $m$. Therefore, the height of $W$ must be $m$. We conclude the proof by
plugging in the definition of $\horiz_\nu(V_j)$ in the condition.
\end{proof}

The following proposition show that $\ddyck$ is the same bijection as that in \cite{preville17enumeration}, but from non-crossing lattice paths instead of binary trees. 
\begin{proposition}
\label{prop:dyck-bij}
Given a northeast path $\nu$ of length $n$, the map $\ddyck$ is an isomorphism
from $\Tamari_\nu$ to an interval $I_\nu$ of $\Tamari_{n+1}$.
\end{proposition}

\begin{proof}
It is clear that $\ddyck(\nu,\mu)$ is a Dyck path of semilength $n+1$ for
every $\mu\in\Paths_{\nu}$. We already know from
\cite{preville17enumeration}*{Theorem~3} that $\Tamari_\nu$ is isomorphic to
some interval $I_\nu$ in $\Tamari_{n+1}$. We only need to show that this
interval is exactly $\bigl\{\ddyck(\nu,\mu)\mid \mu\in\Paths_{\nu}\bigr\}$
under ordinary rotation order.  To that end, we show that if $\mu'$ covers
$\mu$ in $\Tamari_\nu$, then $P' = \ddyck(\nu,\mu')$ covers $P =
\ddyck(\nu,\mu)$ in $\Tamari_{n+1}$.
  
By abuse of notation, if we have $P = D^{c_0} U D^{c_1} U \cdots U
D^{c_{n+1}}$ for some $(c_i)_{0 \leq i \leq n+1}$, then we also write $P =
[c_0, c_1, \ldots, c_{n+1}]$. We note that $c_0$ is always $0$.

Now pick an arbitrary northeast path $\nu$ of length $n$ and some
$\mu\in\Paths_{\nu}$. Suppose that $\nu = [a_0, a_{1},\ldots, a_k]$ and $\mu =
[b_0, b_{1},\ldots, b_k]$. For $d\in\{0,1,\ldots,k\}$ we define
\begin{displaymath}
f_{\nu}(d) \defs d + \sum_{i=0}^d a_i. 
\end{displaymath}
Then, $f_{\nu}(d)$ describes the number of steps on $\nu$ before the
$(d+1)$-st north step.
  
Let $P=\ddyck(\nu,\mu)$.  If $P = [c_0, c_1, \ldots, c_n]$, then each run of
east steps in $\nu$ contributes some entry $c_{i}=0$.  Since we add one more
up step to $P$ than such a run contains east steps, it follows that $c_i = 0$
except when $i = f_{\nu}(j)+1$ for some $0 \leq j \leq k$, and in such case,
we have $c_{f_{\nu}(j)} = b_j + 1$. The same holds for $\mu' = [b_0',
b'_{1},\ldots, b_k']$ and $P' = [c_0', c_1', \ldots, c_n']$. 
	
Since $\mu'$ covers $\mu$ in $\Tamari_\nu$, denote by $\ell$ and $m$ the two
indices satisfying the conditions of Proposition~\ref{prop:nu-tamari-cover}.
Now, let $\ell^* = f_\nu(\ell)+1$ and $m^* = f_\nu(m)+1$. We now show that
$\ell^*$ and $m^*$ are indices satisfying the conditions of
Proposition~\ref{prop:nu-tamari-cover} for $P$ and $P'$ in
$\Tamari_{(NE)^{n+1}}$. It is clear that $\ell^* < m^*$, and from the
definition of $\ddyck$, we know that $c_i = c_i'$ except for $\ell^*$ and
$m^*$. We also have $c_{\ell^*}' = b_\ell' + 1 = b_\ell = c_{\ell^*} - 1$.
Similarly, $c_{m^*}' = c_{m^*} + 1$.
    
For $j^{*}>\ell^{*}$, let $s(j^{*})\defs\sum_{i=\ell^{*}+1}^{j^{*}}(c_{i}-1)$.
To guarantee that $P'$ covers $P$ in $\Tamari_{n+1}$, we transport these paths
to $\Tamari_{(NE)^{n+1}}$ and use Proposition~\ref{prop:nu-tamari-cover}. We
thus only need to show that $m^*$ is the first index after $\ell^*$ such that
$s(m^{*})\geq 0$, as $(NE)^{n+1} = [0, 1, \ldots, 1]$. Suppose that $j^*$ is
the smallest index with $j^* > \ell^*$ and $s(j^{*}) \geq 0$. If there is no
index $j$ such that $j^* = f_{\nu}(j)+1$, then $c_{j^*} = 0$ and thus
$s(j^{*}-1)\geq s(j^{*})\geq 0$, contradicting the minimality of $j^*$. We
thus have $j^* = f_{\nu}(j)+1$ for some $j$, and we observe that
\begin{align*}
	s(j^{*}) & = \sum_{i = \ell^* + 1}^{j^*} (c_i - 1) = \left(\sum_{i = \ell + 1}^{j} (b_i + 1)\right) - (j^* - \ell^*)\\
	& = \left(\sum_{i=\ell+1}^{j} b_{i}\right) - \underbrace{\Bigl((j^{*}-\ell^{*})-(j - \ell)\Bigr)}_{=\lvert\{i\mid \ell^{*}<i\leq j^{*}, c_{i}=0\}\rvert}  = \sum_{i=\ell + 1}^{j} (b_i - a_i).
\end{align*}
The last equality follows from the fact that $c_{i}=0$ indicates two
consecutive up steps in $P$ and therefore correspond to an east step in $\nu$.
Similarly, we have $s(m^*) = \sum_{i=\ell+1}^{m} (b_i - a_i)$, and by
Proposition~\ref{prop:nu-tamari-cover}, we have $s(m^{*}) \geq 0$. By the
minimality of $j^*$, we have $j^* \leq m^*$.
  
As $j$ satisfies the sum condition of Proposition~\ref{prop:nu-tamari-cover}
with respect to $\ell$, we must have $m\leq j$. This implies $m^{*}\leq
j^{*}$, and we obtain $j^{*}=m^{*}$.  We conclude that $P'$ covers $P$ in
$\Tamari_{n+1}$.
\end{proof}

\subsection{Two sequence statistics of Dyck paths}
	\label{sec:dyck_sequence_stats}

In preparation of things to come, we now consider an anti-isomorphism of
$\Tamari_{n}$ that exchanges two particular sequence statistics on Dyck paths. 
While this property is known to experts, we could not find an explicit reference stating it in the framework of Dyck paths.  It can be deduced, however, from the recent work of Pons on Tamari interval posets~\cite{pons19intervalposets}; see the paragraph after Theorem~23 therein.  We translate the appropriate specialization of her result to the framework of Dyck paths.
We then relate this map and these statistics to $\nu$-Tamari lattices via the map $\ddyck$. 

A \defn{rising contact} of a Dyck path $P$ is an up step in $P$ that starts
on the $x$-axis. Every Dyck path $P$ of length $n > 0$ can be uniquely
decomposed into $P = P_\ell U P_r D$ with $P_\ell, P_r$ both Dyck paths, by
taking $P_\ell$ to be the sub-path before the last rising contact of $P$. We
denote by $\epsilon$ the empty Dyck path, and we define an involution $\conj$
recursively by
\begin{align}
\label{eq:conj-def}
\conj(\epsilon) \defs \epsilon, && \conj(P_\ell U P_r D) \defs \conj(P_r) U \conj(P_\ell) D.
\end{align}

The following is well-known and can be proven by induction using the
decomposition $P = P_\ell U P_r D$.

\begin{proposition}
\label{prop:tamari-inv}
The involution $\conj$ is an anti-isomorphism of $\Tamari_n$. 
\end{proposition}

We define $\conj' \defs \ddyck^{-1} \circ \conj \circ \ddyck$, which is simply
$\conj$ conjugated to the domain of $\nu$-paths using $\ddyck$. See
Figure~\ref{fig:conj} for an example of $\conj$ and $\conj'$.

Given a northeast path $\nu$, we denote by $\flip(\nu)$ the northeast path
obtained by reversing $\nu$ and exchanging north and east steps.
Geometrically, $\flip(\nu)$ is $\nu$ reflected across a diagonal of slope
$-1$. It is known that, for $(\nu', \mu') = \conj'(\nu, \mu)$, we have $\nu =
\flip(\nu)$ (see \cite[Theorem~2~and~3]{preville17enumeration}).
We have the following corollary.

\begin{figure}
  \centering
  \includegraphics[page=5,width=\textwidth]{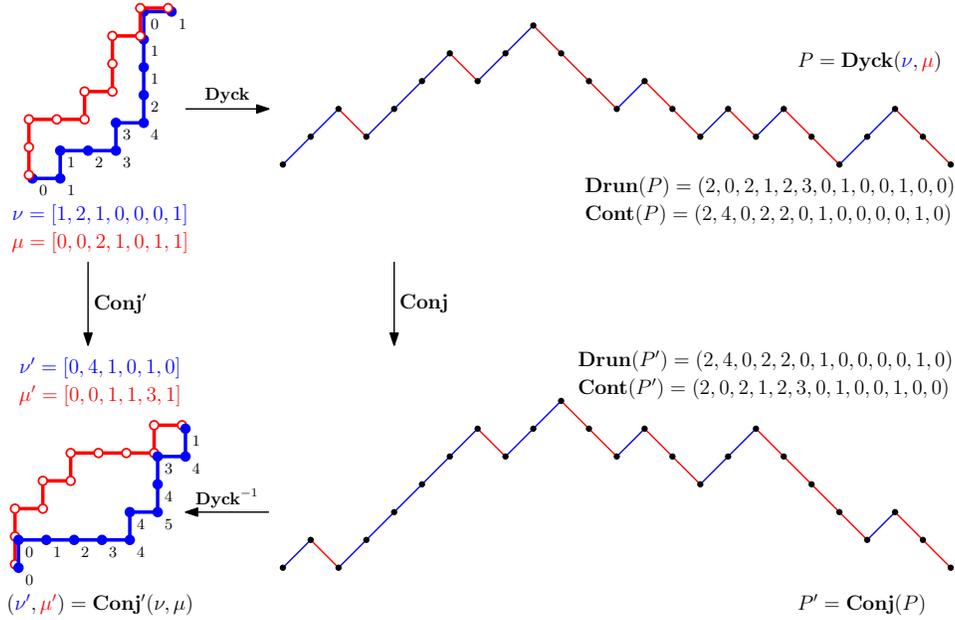}
  \caption{Example of the bijections $\conj$ and $\conj'$ and how they
transform the statistics $\cont$ and $\drun$. The reverse horizontal distances
of points on the lower paths are also given.}
  \label{fig:conj}
\end{figure}

\begin{corollary} \label{coro:tamari-inv}
The bijection $\conj'$ is an anti-isomorphism between $\Tamari_{v}$ and
$\Tamari_{\flip(v)}$.
\end{corollary}


We now consider two statistics on Dyck paths that are interchanged by $\conj$.
Given a Dyck path $P$, we define its \defn{descent run sequence}, denoted by
$\drun(P)$ and \emph{indexed from $0$ to $n$}, as follows. We write $P =
D^{c_0} U D^{c_1} U \cdots U D^{c_n}$ with some $c_i \geq 0$ (again, noting
$c_0 = 0$), then we take $\drun(P)_i = c_{n-i}$ for all $0 \leq i \leq n$. The
map $\drun$ is injective because $P$ can be reconstructed from the $c_i$'s.

We now define another sequence statistic.
The \defn{contact sequence} of $P$, denoted by $\cont(P)$ and \emph{indexed
from $0$ to $n$}, is obtained by taking $\cont(P)_0$ the number of rising
contacts of $P$, and $\cont(P)_i$ the number of rising contacts of the
sub-Dyck path strictly between the $i$-th up step and its matching down step.

Examples for $\cont$ and $\drun$ are given in Figure~\ref{fig:conj}. The
following result, also illustrated in Figure~\ref{fig:conj}, is well-known. In
terms of binary trees, $\cont$ (resp. $\drun$) describes maximal left (resp.
right) descending paths, and the counterpart of $\conj$ on binary trees is
taking the vertical mirror image. 
The following can be proven inductively.

\begin{proposition}\label{prop:drun-cont}
  For any Dyck path $P$, we have $\cont(P) = \drun\bigl(\conj(P)\bigr)$.
\end{proposition}

As $\ddyck$ is bijective, let $(\nu, \mu) = \ddyck^{-1}(P)$, and define
$\drun(\nu, \mu) = \drun(P)$ and $\cont(\nu, \mu) = \cont(P)$. Suppose that
$\nu = [a_0, a_1, \ldots, a_k]$ and $\mu = [b_0, b_1, \ldots, b_k]$. Then we
have
\begin{equation}
\label{eq:drun-path-pair}
\drun(\nu, \mu) = \drun(P) = (b_k+1, 0^{a_k}, b_{k-1}+1, 0^{a_{k-1}}, \ldots, b_0+1, 0^{a_0}, 0).
\end{equation}
Here ``$0^{a}$'' means the entry $0$ repeated $a$ times.  For an expression of
$\cont$, we need some more definitions.
We define the \defn{reverse horizontal distance} to $\mu$ of a lattice point
$Q$ on $\nu$, denoted by $\horiz'_\mu(Q)$, to be the number of west steps
$(-1,0)$ we can take from $Q$ before crossing to the other side of $\mu$. See
Figure~\ref{fig:conj} for an example.  This figure also illustrates the
following result.

\begin{proposition} \label{prop:cont-path-pair}
Let $P = \ddyck(\nu, \mu)$ with $\nu, \mu$ northeast paths of length $n$. Take
the sequence $(d_i)_{0 \leq i \leq n}$ with $d_i = \horiz'_\mu(Q_i)$, where
$Q_i$ is the $(i+1)$-st lattice point of $\mu$. Then we have
\begin{align}
\begin{split} \label{eq:cont-path-pair}
\cont(\nu, \mu)_0 &= \bigl\lvert \{ \ell \mid 0 \leq \ell \leq n, d_\ell = 0 \}\bigr\rvert; \\
\cont(\nu, \mu)_i &= \bigl\lvert \{ \ell \mid i < \ell \leq n, d_\ell = d_i + 1, \forall i < m \leq \ell, d_m > d_i \}\bigr\rvert.
\end{split}
\end{align}
\end{proposition}

\begin{proof}
Assume that $\nu = [a_0, a_1, \ldots, a_k]$ and $\mu = [b_0, b_1, \ldots,
b_k]$. For each $0 \leq i \leq k$, there are $a_i + 1$ lattice points of
height $i$ on $\nu$, and the $(i+1)$-st consecutive run of up steps of $P$
consists of $a_i+1$ steps.
According to the construction of $\ddyck$, the $(i+1)$-st up step in $P$
corresponds to a lattice point $Q_{i}$ on $\nu$.  Moreover, $Q_{i}$ is the
leftmost point with height $j$ on $\nu$ if, and only if the corresponding up
step is the first in the $j$-th run of up steps in $P$.
  
We now prove that $d_i$ is the ordinate of the starting point of the
$(i+1)$-st up step in $P$. We proceed by induction. For $i = 0$, we have $d_0
= 0 = \horiz'_\mu(Q_0)$, as $\nu$ and $\mu$ start at the origin. Now suppose
that $d_{i-1}$ is the ordinate of the starting point of the $i$-th up step in
$P$. We have two cases.
\begin{enumerate}[(i)]
\item If $Q_i$ is not the leftmost lattice point with its height, then its
corresponding up step in $P$ directly comes after the one associated with
$Q_{i-1}$. Thus, the ordinate of the starting point of this up step is
$d_{i-1} + 1 = d_i$.
\item If $Q_i$ is the leftmost lattice point with height $\ell$, then its
corresponding up step is the first up step of the next run after the up step
associated with $Q_{i-1}$. The ordinate of the up step associated with $Q_i$
is thus $d_{i-1} + 1 - (b_\ell + 1)$, taking into account the length of the
$\ell$-th run of down steps in $P$. This is also equal to $d_i$, as we reach
$Q_i$ from $Q_{i-1}$ by a north step, and it takes $b_\ell$ less west steps to
cross $\mu$ from $Q_i$ than from $Q_{i-1}$.
\end{enumerate}
We thus conclude the induction. Now \eqref{eq:cont-path-pair} is a
translation of the definition of $\cont$ in terms of the $d_i$'s.
\end{proof}

\section{The $\alpha$-Tamari lattices are certain $\nu$-Tamari lattices}
\label{sec:alpha_tamari_lattice_isomorphism}

It was shown in \cite{ceballos20the}*{Theorem~II}, that the $\alpha$-Tamari lattice is isomorphic to a certain $\nu$-Tamari lattice, namely, when $\nu$ is the \defn{$\alpha$-bounce path}, defined by
\begin{equation}\label{eq:alpha_bounce_path}
	\nu_{\alpha} \defs N^{\alpha_{1}}E^{\alpha_{1}}N^{\alpha_{2}}E^{\alpha_{2}}\cdots N^{\alpha_{r}}E^{\alpha_{r}},
\end{equation}
where $\alpha=(\alpha_{1},\alpha_{2},\ldots,\alpha_{r})$.  The proof given in
\cite{ceballos20the} is rather indirect and exploits certain lattice-theoretic
properties of the $\alpha$- and the $\nu$-Tamari lattices. In this section,
we give a direct proof using the realization of $\Tamari_{\alpha}$ as the
componentwise order on $\alpha$-codes established in
Theorem~\ref{thm:alpha_tamari_lattice_codes}.

In particular, we construct a bijection from the $\alpha$-codes to the
$\nu_{\alpha}$-bracket vectors.  Let us adapt the definitions to the
particular case of $\nu_{\alpha}$.  The \defn{minimal $\nu_\alpha$-bracket
vector}, denoted by $\bvec^{\min}_{\alpha}$, is defined by
\begin{displaymath}
	\bvec^{\min}_{\alpha}(k) \defs \begin{cases}
		i+s_{a-1}-1, & \text{if}\;k=2s_{a-1}+i\;\text{for}\;0<i\leq\alpha_{a},\\
		s_{a}, & \text{if}\;k=2s_{a-1}+\alpha_{a}+i\;\text{for}\;0<i\leq\alpha_{a},\\
		n, & \text{if}\;k=2n+1.
	\end{cases}
\end{displaymath}
We write $\bvecset_{\alpha}$ instead of $\bvecset_{\nu_{\alpha}}$ for the set of $\nu_{\alpha}$-bracket vectors.

For $\bvec \in \bvecset_\alpha$, by \eqref{it:bvec1}, there are $n+1$
positions with fixed value in a vector of length $2n+1$. For simplification,
we define a reduced version of $\nu_\alpha$-bracket vectors. For
$\bvec\in\bvecset_{\alpha}$, we define its \defn{$\nu_\alpha$-reduced vector}
$\rvec$ by 
\begin{displaymath}
	\rvec(s_{a-1} + i) \defs \bvec(2s_{a-1} + \alpha_{a} + i)
\end{displaymath}
for $1 \leq i \leq \alpha_a$.  It is clear that $\rvec$ is obtained from
$\bvec$ by removing components whose indices are fixed positions. To recover $\bvec$ from
$\rvec$, we only need to fill in the positions of the fixed positions according to
\eqref{it:bvec1}. Let $\redbvec$ denote the ``reduction'' map from $\bvec$ to $\rvec$, and let $\extrvec$ be its inverse.

Such $\nu_\alpha$-reduced vectors thus inherit the following
properties from $\nu_\alpha$-bracket vectors.

\begin{proposition} \label{prop:reduced-def}
A vector $\rvec\in\mathbb{N}^{n}$ is a $\nu_\alpha$-reduced vector if, and
only if:
\begin{description}
\item[R1\label{it:rvec1}] for $1 \leq i \leq n$, we have $s_{\indicator(i)} \leq \rvec(i) \leq n$;
\item[R2\label{it:rvec2}] for all $i, j$ with $i < j \leq s_{\indicator(\rvec(i)+1)-1}$, we have $\rvec(j) \leq \rvec(i)$.
\end{description}
\end{proposition}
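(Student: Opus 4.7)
The plan is to translate conditions \eqref{it:bvec1}, \eqref{it:bvec2}, \eqref{it:bvec3} through the bijection $\redbvec$ into conditions on $\rvec$, and check that the result is exactly \eqref{it:rvec1} and \eqref{it:rvec2}. The first step is to identify, for each $i \in [n]$, the position in $\bvec$ carrying the reduced entry $\rvec(i)$: writing $i = s_{a-1} + j$ with $1 \leq j \leq \alpha_a$, a direct computation gives $2 s_{a-1} + \alpha_a + j = s_a + i$, so this position is $p_i \defs s_{\indicator(i)} + i$. Since $\indicator$ is weakly increasing, the map $i \mapsto p_i$ is strictly increasing, and I would also record up front that $k \mapsto f_k$ is strictly increasing on $\{0, 1, \ldots, n\}$, directly from the formula $f_k = k + 1 + s_{\indicator(k+1)-1}$ together with the special case $f_n = 2n+1$.

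Next I would dispatch \eqref{it:bvec1} and \eqref{it:bvec2}. Condition \eqref{it:bvec1} is automatic, since $\extrvec$ fills the fixed positions exactly with the prescribed values. For \eqref{it:bvec2}, the fixed positions satisfy $\bvec^{\min}_{\alpha}(f_k) = k = \bvec(f_k) \leq n$ with no work, while at a non-fixed position $p_i$ the formula $\bvec^{\min}_{\alpha}(p_i) = s_{\indicator(i)}$ turns the inequality into $s_{\indicator(i)} \leq \rvec(i) \leq n$, which is exactly \eqref{it:rvec1}.

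The main work is translating \eqref{it:bvec3} into \eqref{it:rvec2}. Suppose $\bvec(i') = k$. If $i' \in F$ then $i' = f_k$ (the unique fixed position with value $k$) and the range $i' < j < f_k$ is empty, so \eqref{it:bvec3} is vacuous. Otherwise $i' = p_i$ with $\rvec(i) = k$, and I would first check that $p_i < f_k$: by \eqref{it:rvec1} we have $k \geq s_{\indicator(i)}$, hence $s_{\indicator(k+1)-1} \geq s_{\indicator(i)}$, and combined with $i \leq s_{\indicator(i)}$ this gives $p_i \leq 2 s_{\indicator(i)} \leq k + s_{\indicator(k+1)-1} < f_k$. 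Among the intermediate positions $p_i < j < f_k$, the fixed ones are $f_l$ with $l < k$ by monotonicity of $f$, and carry values below $k$ automatically. For a non-fixed intermediate position $j = p_{j'}$, a short case analysis on $\indicator(j')$ shows that $p_i < p_{j'} < f_k$ is equivalent to $i < j' \leq s_{\indicator(k+1)-1}$: if $\indicator(j') \leq \indicator(k+1)-1$, then $p_{j'} \leq 2 s_{\indicator(k+1)-1} < f_k$ holds automatically; and if $\indicator(j') \geq \indicator(k+1)$, then $p_{j'} \geq s_{\indicator(k+1)} + s_{\indicator(k+1)-1} + 1 > f_k$, so $p_{j'}$ lies outside the range. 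Under this translation, \eqref{it:bvec3} at $p_i$ with value $k = \rvec(i)$ becomes exactly \eqref{it:rvec2} evaluated at $i$, and both directions of the equivalence follow by reading the two sets of conditions in parallel.

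The main obstacle is the bookkeeping in this last case analysis: one must be careful with the boundary values $s_{\indicator(k+1)-1}$, $s_{\indicator(k+1)}$, and $k+1$ to avoid off-by-one errors, particularly at the right end of each $\alpha$-region and at the terminal fixed position $f_n = 2n+1$. Once these identities are pinned down, the proposition reduces to a direct unpacking of definitions.
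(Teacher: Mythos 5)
Your proof is correct and follows essentially the same route as the paper's: translate (B1)--(B3) through the reduction map, with (B1) automatic, (B2) equivalent to (R1), and (B3) equivalent to (R2) because every position strictly between $2s_{\indicator(k+1)-1}$ and $f_k$ is a fixed position carrying a value less than $k$. The paper compresses this last observation into one line, while you spell out the position correspondence $p_i = s_{\indicator(i)}+i$ and the attendant case analysis, but the underlying argument is identical.
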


\begin{proof}
Let $\bvec$ be the $\nu_\alpha$-bracket vector corresponding to $\rvec$. We
only need to show that the conditions for $\rvec$ are equivalent to those for
$\bvec$. 
	
Condition~\eqref{it:bvec1} for $\bvec$ is satisfied by construction. The
equivalence between \eqref{it:bvec2} for $\bvec$ and \eqref{it:rvec1} for
$\rvec$ is trivial given the definition of $\bvec^{\min}_\alpha$. 
	
Now for the equivalence between \eqref{it:bvec3} for $\bvec$ and
\eqref{it:rvec2} for $\rvec$, we observe that for \eqref{it:bvec3} to hold for
$\bvec$, for each $i$ with $\bvec(i) = k$, we only need to check for all $j$
with $i < j \leq 2s_{\indicator(k+1)-1}$, since all indices from
$2s_{\indicator(k+1)-1}+1$ to $f_k$ are fixed positions.
\end{proof}

We can thus take Proposition~\ref{prop:reduced-def} as the definition of
$\nu_\alpha$-reduced vectors without passing through $\nu_\alpha$-bracket
vectors, and we denote by $\rvecset_\alpha$ the set of all
$\nu_\alpha$-reduced vectors. 
By Proposition~\ref{prop:reduced-def},
$(\rvecset_\alpha, \comporder)$ is isomorphic to the $\nu_{\alpha}$-Tamari
lattice. We also have the following property.

\begin{proposition} \label{prop:reduced-decrease}
Given a $\nu_\alpha$-reduced vector $\rvec$, for any indices $i < j$ with
$\indicator(i) = \indicator(j)$, we have $\rvec(i) \geq \rvec(j)$.
\end{proposition}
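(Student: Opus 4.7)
The plan is to show that $j$ falls within the range governed by property \eqref{it:rvec2} applied at index $i$, so that $\rvec(j)\leq\rvec(i)$ follows immediately.

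Let $a=\indicator(i)=\indicator(j)$, so that $s_{a-1}<i<j\leq s_{a}$. First I would apply \eqref{it:rvec1} to $i$ to obtain the lower bound $\rvec(i)\geq s_{\indicator(i)}=s_{a}$. The point is that this lower bound on $\rvec(i)$ forces the index $\rvec(i)+1$ to lie in an $\alpha$-region of index strictly greater than $a$ (assuming $\rvec(i)<n$), which is exactly what is needed to make $j$ fit in the range of \eqref{it:rvec2}.

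More precisely, I would split into two cases. If $\rvec(i)=n$, then by \eqref{it:rvec1} applied at $j$ we trivially get $\rvec(j)\leq n=\rvec(i)$. Otherwise $\rvec(i)<n$, so that $\rvec(i)+1\in[n]$ and $\indicator(\rvec(i)+1)$ is well-defined; since $\rvec(i)+1\geq s_{a}+1$, the value $\rvec(i)+1$ lies in some region of index at least $a+1$, whence $s_{\indicator(\rvec(i)+1)-1}\geq s_{a}\geq j$. Then the hypothesis of \eqref{it:rvec2} is satisfied for the pair $(i,j)$, and the conclusion $\rvec(j)\leq\rvec(i)$ follows directly.

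There is no real obstacle here; the entire content is the observation that \eqref{it:rvec1} alone is strong enough to feed into \eqref{it:rvec2}. The only minor point of care is the boundary case $\rvec(i)=n$, for which $\indicator(\rvec(i)+1)$ is not literally defined but which can be treated separately in one line.
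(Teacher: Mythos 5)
Your argument is correct and is essentially the paper's own proof: both use \eqref{it:rvec1} to get $\rvec(i)\geq s_{\indicator(i)}$, deduce that $s_{\indicator(\rvec(i)+1)-1}\geq s_{\indicator(i)}\geq j$, and then invoke \eqref{it:rvec2}. Your separate treatment of the boundary case $\rvec(i)=n$ (where $\indicator(\rvec(i)+1)$ is undefined) is a small point of extra care that the paper's proof passes over silently.
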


\begin{proof}
Let $k = \rvec(i)$. By \eqref{it:rvec1}, we have $k \geq s_{\indicator(i)}$,
and thus $s_{\indicator(k+1)} - 1 \geq s_{\indicator(i)+1} - 1 \geq
s_{\indicator(i)}$. Since $\indicator(i) = \indicator(j)$, we have $i < j \leq
s_{\indicator(i)} \leq s_{\indicator(k+1)} - 1$. Then \eqref{it:rvec2} in
Proposition~\ref{prop:reduced-def} states that $\rvec(j) \leq k = \rvec(i)$.
\end{proof}

For any composition $\alpha$, we define a transform $\rvectocode$ on
$\rvecset_\alpha$ such that 
\begin{displaymath}
	\bigl(\rvectocode(\rvec)\bigr)_i \defs \rvec(2s_{\indicator(i)} - \alpha_{\indicator(i)} - i + 1) - s_{\indicator(i)}. 
\end{displaymath}
More intuitively, to
obtain $\rvectocode(\rvec)$, we first split $\rvec$ into regions according to
$\alpha$, then reverse each region while subtracting $s_k$ on the $k\th$
region. We denote by $\codetorvec$ its inverse. Although both $\rvectocode$
and $\codetorvec$ depend on $\alpha$, the composition $\alpha$ should always
be clear from the context. The transformation $\rvectocode$ relates
$\alpha$-codes with $\rvecset_{\alpha}$.

Figure~\ref{fig:code_to_bracket} illustrates the map $\codetorvec$ on the
$(2,3,2,1)$-code $(2,6,0,1,3,1,1,0)$, and Table~\ref{tab:code_to_bracket_121} illustrates this bijection for $\alpha=(1,2,1)$.

\begin{figure}
	\centering
	\begin{tikzpicture}\small
		\def\x{1};
		\draw(1*\x,1*\x) node{$(2,6,0,1,3,1,1,0)$};
			\draw(1*\x,.5*\x) node[rotate=270]{\tiny $\in$};
			\draw(1*\x,.25*\x) node{$\Codes_{(2,3,2,1)}$};
		\draw[thick,c1](0*\x,.8*\x) -- (0*\x,.7*\x) -- (.25*\x,.7*\x) -- (.25*\x,.8*\x);
		\draw[thick,c2](.55*\x,.8*\x) -- (.55*\x,.7*\x) -- (1.15*\x,.7*\x) -- (1.15*\x,.8*\x);
		\draw[thick,c3](1.45*\x,.8*\x) -- (1.45*\x,.7*\x) -- (1.7*\x,.7*\x) -- (1.7*\x,.8*\x);
		\draw[thick,c4](2*\x,.8*\x) -- (2*\x,.7*\x);
		\draw(2.75*\x,1.1*\x) node{$\overset{\codetorvec}{\longrightarrow}$};
		\draw(4.5*\x,1*\x) node{$(8,4,8,6,5,8,8,8)$};
			\draw(4.5*\x,.5*\x) node[rotate=270]{\tiny $\in$};
			\draw(4.5*\x,.25*\x) node{$\rvecset_{(2,3,2,1)}$};
		\draw[thick,c1](3.5*\x,.8*\x) -- (3.5*\x,.7*\x) -- (3.75*\x,.7*\x) -- (3.75*\x,.8*\x);
		\draw[thick,c2](4.05*\x,.8*\x) -- (4.05*\x,.7*\x) -- (4.65*\x,.7*\x) -- (4.65*\x,.8*\x);
		\draw[thick,c3](4.95*\x,.8*\x) -- (4.95*\x,.7*\x) -- (5.2*\x,.7*\x) -- (5.2*\x,.8*\x);
		\draw[thick,c4](5.5*\x,.8*\x) -- (5.5*\x,.7*\x);
		\draw(6.25*\x,1.1*\x) node{$\overset{\extrvec}{\longrightarrow}$};
		\draw(9.5*\x,1*\x) node{$(\mathbf{0},\mathbf{1},8,4,\mathbf{2},\mathbf{3},\mathbf{4},8,6,5,\mathbf{5},\mathbf{6},8,8,\mathbf{7},8,\mathbf{8})$};
			\draw(9.5*\x,.5*\x) node[rotate=270]{\tiny $\in$};
			\draw(9.5*\x,.25*\x) node{$\bvecset_{(2,3,2,1)}$};
		\draw[thick,c1](7.75*\x,.8*\x) -- (7.75*\x,.7*\x) -- (8*\x,.7*\x) -- (8*\x,.8*\x);
		\draw[thick,c2](9.2*\x,.8*\x) -- (9.2*\x,.7*\x) -- (9.8*\x,.7*\x) -- (9.8*\x,.8*\x);
		\draw[thick,c3](10.7*\x,.8*\x) -- (10.7*\x,.7*\x) -- (10.95*\x,.7*\x) -- (10.95*\x,.8*\x);
		\draw[thick,c4](11.55*\x,.8*\x) -- (11.55*\x,.7*\x);
	\end{tikzpicture}
	\caption{Illustration of the map $\codetorvec$.}
	\label{fig:code_to_bracket}
\end{figure}

\begin{table}
	\centering
	\begin{tabular}{c|c|c}
		$\cvec\in\Codes_{(1,2,1)}$ & $\codetorvec(\cvec)\in\rvecset_{(1,2,1)}$ & $\extrvec\circ\codetorvec(\cvec)\in\bvecset_{(1,2,1)}$ \\
		\hline\hline
		$(0,0,0,0)$ & $(1,3,3,4)$ & $(\mathbf{0},1,\mathbf{1},\mathbf{2},3,3,\mathbf{3},4,\mathbf{4})$\\
		$(1,0,0,0)$ & $(2,3,3,4)$ & $(\mathbf{0},2,\mathbf{1},\mathbf{2},3,3,\mathbf{3},4,\mathbf{4})$\\
		$(0,0,1,0)$ & $(1,4,3,4)$ & $(\mathbf{0},1,\mathbf{1},\mathbf{2},4,3,\mathbf{3},4,\mathbf{4})$\\
		$(2,0,0,0)$ & $(3,3,3,4)$ & $(\mathbf{0},3,\mathbf{1},\mathbf{2},3,3,\mathbf{3},4,\mathbf{4})$\\
		$(1,0,1,0)$ & $(2,4,3,4)$ & $(\mathbf{0},2,\mathbf{1},\mathbf{2},4,3,\mathbf{3},4,\mathbf{4})$\\
		$(0,1,1,0)$ & $(1,4,4,4)$ & $(\mathbf{0},1,\mathbf{1},\mathbf{2},4,4,\mathbf{3},4,\mathbf{4})$\\
		$(3,0,0,0)$ & $(4,3,3,4)$ & $(\mathbf{0},4,\mathbf{1},\mathbf{2},3,3,\mathbf{3},4,\mathbf{4})$\\
		$(3,0,1,0)$ & $(4,4,3,4)$ & $(\mathbf{0},4,\mathbf{1},\mathbf{2},4,3,\mathbf{3},4,\mathbf{4})$\\
		$(1,1,1,0)$ & $(2,4,4,4)$ & $(\mathbf{0},2,\mathbf{1},\mathbf{2},4,4,\mathbf{3},4,\mathbf{4})$\\
		$(3,1,1,0)$ & $(4,4,4,4)$ & $(\mathbf{0},4,\mathbf{1},\mathbf{2},4,4,\mathbf{3},4,\mathbf{4})$\\
	\end{tabular}
	\caption{Illustration of the map $\codetorvec$ for $\alpha=(1,2,1)$.}
	\label{tab:code_to_bracket_121}
\end{table}

\begin{proposition} \label{prop:bij-rvectocode}
Given a composition $\alpha$ of $n$, the transformation $\rvectocode$ is a
bijection from $\rvecset_\alpha$ to $\Codes_\alpha$.
\end{proposition}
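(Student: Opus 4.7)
The plan is to recognize that, viewed just as a transformation of integer tuples, $\rvectocode$ simply reverses each $\alpha$-region and subtracts the offset $s_{\indicator(i)}$; this makes it manifestly invertible, with inverse $\codetorvec(\cvec)(j) = c_{s_a + s_{a-1} - j + 1} + s_a$ where $a = \indicator(j)$. The entire content of the proposition therefore reduces to verifying that $\rvectocode$ sends $\rvecset_\alpha$ into $\Codes_\alpha$ and that $\codetorvec$ sends $\Codes_\alpha$ into $\rvecset_\alpha$.

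For the forward direction, fix $\rvec \in \rvecset_\alpha$, and for each $i$ in region $a := \indicator(i)$ write $i' := s_a + s_{a-1} - i + 1$, so that $(\rvectocode(\rvec))_i = \rvec(i') - s_a$. Condition \eqref{it:code1} is immediate from \eqref{it:rvec1}, and \eqref{it:code2} follows from Proposition~\ref{prop:reduced-decrease} applied to the pair $i'-1 < i'$ in region $a$. The substantive step is \eqref{it:code3}: given $i \in [s_{r-2}]$ and $b > a$ with $c_i \geq s_b - s_a$, the hypothesis unpacks to $\rvec(i') \geq s_b$, which forces $s_{\indicator(\rvec(i')+1)-1} \geq s_b$. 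The mirror image of $s_b$ in region $b$ is $s_{b-1}+1$, and this index lies strictly to the right of $i'$ (since $i' \leq s_a < s_{b-1}+1$); so \eqref{it:rvec2} yields $\rvec(s_{b-1}+1) \leq \rvec(i')$, which is exactly \eqref{it:code3}.

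The reverse direction follows the same pattern. Let $\cvec \in \Codes_\alpha$ and $\rvec = \codetorvec(\cvec)$; condition \eqref{it:rvec1} is immediate from \eqref{it:code1}. For \eqref{it:rvec2}, fix $i < j \leq s_{\indicator(\rvec(i)+1)-1}$ with $a = \indicator(i)$, $b = \indicator(j)$ (the degenerate case $\rvec(i)=n$ makes \eqref{it:rvec2} trivial by \eqref{it:rvec1}). If $a = b$, iterated \eqref{it:code2} on $j' < i'$ in region $a$ gives $\rvec(j) \leq \rvec(i)$. If $a < b$, the upper bound on $j$ forces $b \leq r-1$ and hence $a \leq r-2$; then \eqref{it:code2} yields $c_{j'} \leq c_{s_b}$, and \eqref{it:code3} applied at $(i', b)$---whose hypothesis $c_{i'} \geq s_b - s_a$ is precisely $\rvec(i) \geq s_b$---gives $c_{s_b} \leq c_{i'} + s_a - s_b$, so that $\rvec(j) \leq \rvec(i)$ after adding $s_b$ to both sides.

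The main obstacle is the index bookkeeping in the C3-to-R2 translation. The regional reversal swaps each region's rightmost index $s_b$ (featured in \eqref{it:code3}) with its leftmost index $s_{b-1}+1$ (which is the smallest index against which \eqref{it:rvec2} tests $i'$), and the hypothesis $c_i \geq s_b - s_{\indicator(i)}$ in \eqref{it:code3} corresponds precisely to the range condition $j \leq s_{\indicator(\rvec(i)+1)-1}$ in \eqref{it:rvec2} that permits $j$ to reach region $b$. Once this correspondence is pinned down, the two conditions match exactly, and the handful of boundary cases (last region, $\rvec(i)=n$) collapse harmlessly.
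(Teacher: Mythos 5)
Your proof is correct and follows essentially the same route as the paper: both verify directly that the region-reversal-with-offset map translates \eqref{it:rvec1}--\eqref{it:rvec2} into \eqref{it:code1}--\eqref{it:code3} and back, using Proposition~\ref{prop:reduced-decrease} for \eqref{it:code2} and matching the hypothesis $c_i \geq s_b - s_{\indicator(i)}$ of \eqref{it:code3} with the range condition $j \leq s_{\indicator(\rvec(i)+1)-1}$ of \eqref{it:rvec2}. Your handling of the boundary cases ($b\leq r-1$, $\rvec(i)=n$) is in fact slightly more explicit than the paper's.
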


\begin{proof}
First, for $\rvec \in \rvecset_\alpha$, let $\cvec = \rvectocode(\rvec)$
and let us check that $\cvec$ satisfies the conditions in
Definition~\ref{def:all_codes} using those in
Proposition~\ref{prop:reduced-def} for $\rvec$. By \eqref{it:rvec1} for
$\rvec$ and the definition of $\rvectocode$, clearly $\cvec$ satisfies
\eqref{it:code1}. Proposition~\ref{prop:reduced-decrease} and the definition
of $\rvectocode$ imply that $\cvec$ satisfies \eqref{it:code2}. To check
\eqref{it:code3} for $\cvec$ given that it satisfies \eqref{it:code2}, we only
need to show that, for any $i$ and $j$ such that $\indicator(i) <
\indicator(j)$, if $c_i \geq s_{\indicator(j)} - s_{\indicator(i)}$, then we
have $c_j + s_{\indicator(j)} \leq c_i + s_{\indicator(i)}$. Translating to
$\rvec$, we need to check that, for any $i'$ and $j'$ with $\indicator(i') <
\indicator(j')$, if $\rvec(i') \geq s_{\indicator(j')}$, then we have
$\rvec(j') \leq \rvec(i')$. Now, suppose that $\rvec(i') \geq
s_{\indicator(j')}$. We have $\indicator(\rvec(i')+1) > \indicator(j')$ by the
definition of $\indicator$. As the values are integers, we have
$\indicator(j') \leq \indicator(\rvec(i')+1) - 1$, which means $j' \leq
s_{\indicator(j')} \leq s_{\indicator(\rvec(i')+1)-1}$, and by
\eqref{it:rvec2}, we have $\rvec(j') \leq \rvec(i')$. Therefore, $\cvec$ also
satisfies \eqref{it:code3}.

Now for the reverse direction, given $\cvec \in \Codes_\alpha$, let $\rvec =
\codetorvec(\cvec)$. It is clear that \eqref{it:code1} translates directly to
\eqref{it:rvec1}. We only need to show that \eqref{it:rvec2} holds for
$\rvec$. Suppose that $1 \leq i < j \leq s_{\indicator(\rvec(i)+1)-1}$. If
$\indicator(i) = \indicator(j)$, by the definition of $\codetorvec$ and
\eqref{it:code2} on $\cvec$, we have $\rvec(j) \leq \rvec(i)$. Now we check
the case $\indicator(i) < \indicator(j)$. When translated to $\cvec$,
\eqref{it:rvec2} in this case means that we need to check for any $i' < j'$
such that $\indicator(i') < \indicator(j') \leq \indicator(c_{i'} +
s_{\indicator(i')} + 1) - 1$, we have $c_{j'} + s_{\indicator(j')} \leq c_{i'}
+ s_{\indicator(i')}$. By \eqref{it:code2}, we may assume that $j' = s_a$ for
some $a$. By the definition of $\indicator$, we see that $\indicator(s_a) \leq
\indicator(c_{i'} + s_{\indicator(i')}+1) - 1$ implies $s_a < c_{i'} +
s_{\indicator(i')} + 1$, thus $s_a \leq c_{i'} + s_{\indicator(i')}$ since
they are integers. By \eqref{it:code3}, we have $c_{s_a} \leq c_{i'} - s_a +
s_{\indicator(i')}$. Therefore, \eqref{it:rvec2} holds for $\rvec$, meaning
that $\rvec \in \rvecset_\alpha$.
\end{proof}

This allows us to conclude to the announced simple proof of
Theorem~\ref{thm:alpha_tamari_isomorphism}.

\begin{proof}[Proof of Theorem~\ref{thm:alpha_tamari_isomorphism}]
Let $\Tamari_{\nu_{\alpha}}$ denote the $\nu_{\alpha}$-Tamari lattice.  We
have the following isomorphisms of lattices:
	\begin{displaymath}
		\Tamari_{\alpha} \overset{\vphantom{\mathrm{Pp}}\mathrm{Thm}.~\ref{thm:alpha_tamari_lattice_codes}}{\cong} \bigl(\Codes_{\alpha},\comporder\bigr) \overset{\mathrm{Prop}.~\ref{prop:bij-rvectocode}}{\cong} \bigl(\rvecset_{\alpha},\comporder\bigr) \overset{\vphantom{\mathrm{Pp}}\mathrm{trivial}}{\cong} \bigl(\bvecset_{\alpha},\comporder\bigr) \overset{\vphantom{\mathrm{Pp}}\mathrm{Thm}.~\ref{thm:nu_tamari_bracket_vectors}}{\cong} \Tamari_{\nu_{\alpha}}.\qedhere
	\end{displaymath}
\end{proof}

Note that the proof of Theorem~\ref{thm:alpha_tamari_isomorphism} in
\cite{ceballos20the} relies on lattice-theoretic properties of
$\Tamari_{\alpha}$ and $\Tamari_{\nu_{\alpha}}$, and is only partially
bijective. Our proof here is fully bijective, which gives a clearer vision of
the isomorphism. 

\section{A combinatorial anti-isomorphism on the $\nu_{\alpha}$-Tamari lattice}
\label{sec:stack}

%

\subsection{Two ways from $(\alpha,231)$-avoiding permutations to
$\alpha$-paths}
\label{sec:two_bijections}

Recall that we have fixed a composition $\alpha = (\alpha_1, \alpha_2, \ldots,
\alpha_r)$ of $n$, and that $s_a = \alpha_1 + \alpha_2 + \cdots + \alpha_a$
for $a\in[r]$. Moreover, recall the definition of the \defn{$\alpha$-bounce
path} from \eqref{eq:alpha_bounce_path}.  We usually say \defn{$\alpha$-path}
rather than $\nu_{\alpha}$-path.

We now define two bijections from $(\alpha,231)$-avoiding permutations to
$\alpha$-paths.
The first one uses the $\alpha$-code from Section~\ref{sec:alpha_encoding},
and sends $w\in\Symmetric_{\alpha}(231)$ to
$\varphi(w)\in\Paths_{\nu_{\alpha}}$ satisfying
$\varphi(w)=[f_{0},f_{1},\ldots,f_{n}]$, where
\begin{equation}
\label{eq:fi-def}
f_i \defs \Bigl\lvert\bigl\{ j \mid 1 \leq j \leq n, \code_\alpha(w)_j +
s_{\indicator(j)} = i \bigr\}\Bigr\rvert.
\end{equation}
%
For example, for $\alpha=(1,3,1,2)$ and $w = \hspace*{-.25cm}\raisebox{-.12cm}{\AlphaPerm{1,3,1,2}{c1,c2,c3,c4}{5,3,4,7,1,2,6}{.3}{.25}}\hspace*{-.2cm}\in\Symmetric_{\alpha}(231)$,
we have $\code_{\alpha}(w)=(2,2,2,3,0,0,0)$ and $\varphi(w)=[0,0,0,1,0,1,2,3]$.
Note that the first entry of $\varphi(w)$ is always $0$.

\begin{proposition}
\label{prop:perm-to-path-code}
 The map $\varphi$ is an isomorphism from $\Tamari_\alpha$ to
$\Tamari_{\nu_\alpha}$.
\end{proposition}

\begin{proof}
We write $\varphi(w) = [f_0, f_1, \ldots, f_n]$. By the definition of
$\rvectocode$, we have
  \[
   f_i = \Bigl\lvert\bigr\{ j \mid 1 \leq j \leq n, \rvectocode^{-1}\bigl(\code_\alpha(w)\bigr)_j  = i \bigr\}\Bigr\rvert.
  \]
We may rephrase this using $\nu_{\alpha}$-reduced vectors. Let
$\rvec=\codetorvec\bigl(\code_{\alpha}(w)\bigr)$ denote the
$\nu_{\alpha}$-reduced bracket vector associated with $w$. Then,
  \[
    f_i = \Bigl\lvert\{ j \mid 1 \leq j \leq n, \rvec_j = i \bigr\}\Bigr\rvert - 1.
  \]
Now, if $\bvec=\extrvec(\rvec)$ is the associated $\nu_{\alpha}$-bracket
vector, then the number of entries equal to $i$ in $\bvec$ is $f_{i}+1$.
According to \cite{ceballos20nu}*{Definition~26}, there exists a unique
$\nu_{\alpha}$-path with as many lattice points of height $i$ as there are
entries equal to $i$ in $\bvec$.  (See also item (ii) in the proof of
\cite{ceballos20nu}*{Proposition~27}.)  We conclude that the
$\nu_{\alpha}$-path associated with $\bvec$ is precisely $\varphi(w)$.
  
Therefore, $\varphi$ is precisely the isomorphism used in the proof of
Theorem~\ref{thm:alpha_tamari_isomorphism}.
\end{proof}

The second bijection, denoted by $\Theta$, from $(\alpha, 231)$-avoiding
permutations to $\alpha$-paths was first defined in \cite{muehle19tamari}. We
will use an equivalent definition derived from \cite{ceballos20the}, using a
family of trees called \emph{$\alpha$-trees}, which we will not explicitly
define.

\begin{construction}\label{constr:theta}
Given $w \in \Symmetric_\alpha(231)$, we construct a labeled plane tree $T(w)$
by an insertion procedure. We start with a node labeled $n+1$ as the root, and
we read the elements of $w$ from left to right. Upon reading of an element
$w(i)$, we start a walk from the root. When we reach a node $v$ with label
$\ell$, if $w(i) < \ell$, then we move to the left-most child of $v$;
otherwise, we move to the first sibling of $v$ on its right. When the
destination node does not exist, we add it with label $w(i)$ and terminate the
walk. The labeled plane tree thus obtained is denoted by $T(w)$.

Now we construct a northeast path $P$ from $T(w)$. If the root of $T(w)$ has
$k$ children, then we start $P$ with $k$ north steps. Then, for each
$a\in[r]$, we inspect the elements $w(i)$ in the $a$-th $\alpha$-region
\emph{from right to left}, \ie $i$ runs from $s_a$ down to $s_{a-1} + 1$. For
each such $w(i)$, let $v_i$ be the node with label $w(i)$ in $T(w)$, and we
append $EN^{k_i}$ to $P$, where $k_i$ is the number of children of $v_i$. We
define $\Theta(w)$ to be the path $P$ thus obtained. See
Figure~\ref{fig:theta} for an example.
\end{construction}

\begin{figure}
  \centering
  \includegraphics[page=4,width=0.9\textwidth]{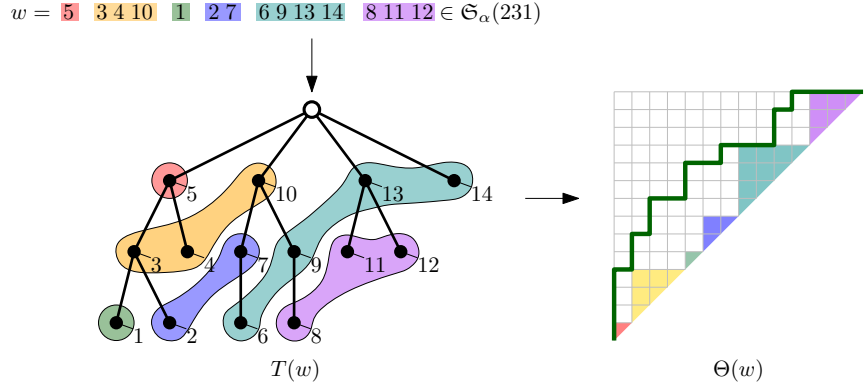}
  \caption{Example of $\Theta(w)$ for $w \in \Symmetric_\alpha(231)$ with
$\alpha = (1,3,1,2,4,3)$.}
  \label{fig:theta}
\end{figure}

For $w\in\Symmetric_{\alpha}(231)$, the tree $T(w)$ from
Construction~\ref{constr:theta} has the following immediate property.

\begin{proposition}[\cite{ceballos20the}]
\label{prop:lac-reading}
For $w \in \Symmetric_\alpha(231)$, let $T(w)$ be the labeled plane tree
constructed in Construction~\ref{constr:theta}. Reading the labels of $T(w)$
in postorder (\ie for each node $u$, the children of $u$ are increasing in
order from left to right, all greater than $u$) gives $1, 2, \ldots, n$.
\end{proposition}

\begin{remark} \label{rmk:theta}
The map $\Theta$ in Construction~\ref{constr:theta} is in fact $\Theta^{-1}
\circ \flip$ in \cite{ceballos20the}. We have altered the definition here for
simplicity. In Construction~\ref{constr:theta}, the tree $T(w)$ is a labeled
version of an $\alpha$-tree, and the map from $w$ to $T(w)$ is the map
$\Lambda_{\mathrm{perm}}$ in \cite{ceballos20the}. Moreover, the map from
$T(w)$ to $\Theta(w)$ is $\Xi_{\mathrm{nn}}$ in that article, but our
definition here is adapted from Lemma~1.31 of the same article. The validity
of our definition of $\Theta$ is ensured by
\cite{ceballos20the}*{Propositions~1.33~and~1.34}. The property in
Proposition~\ref{prop:lac-reading} follows from
\cite{ceballos20the}*{Construction~1.14}.
\end{remark}

\subsection{A stack-processing procedure}

We now give another combinatorial definition of $\varphi$. For $w \in
\Symmetric_\alpha(231)$, we define the \defn{companion} of an element $w(i)$
in $w$ to be the last element it sees, or $w(s_{\indicator(i)})$ when $w(i)$
sees no element. Then we can define $\varphi(w) = [f_0, f_1, \ldots, f_n]$,
where $f_i$ is the number of elements in $w$ with $w(i)$ as its companion. We
check that this definition of $f_i$ is the same as \eqref{eq:fi-def}. We
define the following \defn{stack processing} that can be used to compute the
companions of elements of $w$.

\begin{construction} \label{constr:stack}
Given $w \in \Symmetric_\alpha(231)$, we start with an empty stack $S$ and
then perform the following steps on the $\alpha$-regions \emph{in reverse
order}, \ie $k$ runs from $r$ down to $1$.
\begin{itemize}
\item (\textbf{Popping}) For $i$ from $1$ to $\alpha_k$, consider the $i$-th
element $w(s_{k-1}+i)$ in region $k$. Pop elements from the stack until the
top one is larger than $w(s_{k-1}+i)$. 
\item (\textbf{Pushing}) For $i$ from $1$ to $\alpha_k$, push the element
$w(s_k-i+1)$ into the stack.
\end{itemize}
There are $n$ elements in $w$, and each element passes through two steps,
totaling to $2n$ steps. See Figure~\ref{fig:stack} for an example.
\end{construction}

\begin{remark}
Note that in terms of popping elements we only need the popping
step for the last element in each region, as it is also the largest. However,
taking the popping step for each element into account is important to
understand the link between $\varphi$ and $\Theta$. Namely, given $w \in
\Symmetric_\alpha(231)$, the number of elements popped out in the popping step
of $w(i)$ is the number of children of the node with label $w(i)$ in $T(w)$
(Proposition~\ref{prop:stack-active}), which is in turn the length of the
corresponding run of north steps in $\Theta(w)$.
\end{remark}

\begin{figure} 
\centering
\includegraphics[page=6,width=0.8\textwidth]{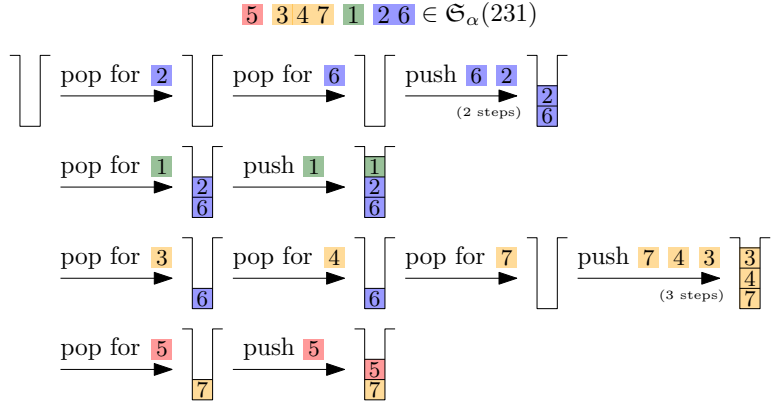}
\caption{An example of stack processing for $w = 5347126\in\Symmetric_{\alpha}(231)$ with $\alpha = (1,3,1,2)$.} \label{fig:stack}
\end{figure}

\begin{lemma}\label{lem:stack-prop}
For $w \in \Symmetric_\alpha(231)$, at each step of the stack processing of
$w$ with the stack $S$, we have:
\begin{enumerate}[\rm (i)]
\item the elements of $S$ are increasing from top to bottom;
\item for elements in $S$, their indices in $w$ are increasing from top to
bottom.
\end{enumerate}
\end{lemma}

\begin{proof}
For the first point, we proceed by induction on the number of steps. The
claim is clearly satisfied at the beginning, when $S$ is empty. The popping
step maintains the claim. Suppose that we are now pushing an element $w(i)$
into $S$. If $w(i)$ is the last element in its region, then by induction
hypothesis, all elements smaller than $w(i)$ should have been popped out;
otherwise, the top of the stack is $w(i+1) > w(i)$. In both cases, pushing
$w(i)$ maintains our claim. We thus conclude the induction.

For the second point, we observe that the claim is valid for the empty stack,
and the pushing steps maintain the claim, since all the elements in the same
region are pushed consecutively starting with the last element in the region.
The popping steps clearly also maintains the claim.
\end{proof}

\begin{proposition} \label{prop:stack-companion}
For $w \in \Symmetric_\alpha(231)$, we consider the popping step of an
element $w(i)$ in the stack processing of $w$ with stack $S$. If after that
popping step $S$ is not empty with top element $w(j)$, then the companion of
$w(i)$ is $w(j-1)$; if $S$ is empty, then the companion of $w(i)$ is $w(n)$.
\end{proposition}

\begin{proof}
Assume that there is an index $\ell$ such that $s_{\indicator(i)} < \ell < j$
and $w(\ell) > w(i)$. We take the smallest such index $\ell$. Then $w(\ell)$
cannot be popped out of $S$ before the treatment of the region
$\indicator(i)$, since an element $w(\ell')$ that pops $w(\ell)$ out must have
$s_{\indicator(i)} < \ell' < \ell$ and $w(\ell') > w(\ell) > w(i)$, violating
the minimality of $\ell$. At this moment, $w(j)$ is also in $S$. By
Lemma~\ref{lem:stack-prop}(ii), $w(j)$ is below $w(\ell)$. Since $w(j)$ is on
top of $S$ after the popping step of $w(i)$, there is some $w(i')$ in the same
region of $i$ with $i' < i$ that popped $w(\ell)$ out, meaning that $w(\ell) <
w(i') < w(i)$, contradicting our hypothesis. Therefore, such $w(\ell)$ does
not exist. From the definition of the popping step, we have $w(j) > w(i)$.
Thus, the companion of $w(i)$ is $w(j-1)$. In the case of empty stack, it
means that no element in previously inserted regions is larger than $w(i)$,
thus the companion of $w(i)$ is $w(n)$.
\end{proof}

\subsection{Stack processing and the bijection $\Theta$}

We now describe a link between stack processing of $w \in \Symmetric_\alpha(231)$ and the tree $T(w)$ in Construction~\ref{constr:theta}. Given $1 \leq k \leq r$, the \defn{nodes of region $k$} of $T(w)$ are those with labels corresponding to the values of $w$ in the $k$-th $\alpha$-region. We say that the root is of region $0$, and that the region $r+1$ is empty. From the insertion procedure, for a node of region $k$ with its parent of region $k'$, we have $k > k'$. The \defn{active nodes} for region $k$ are nodes of region $k' \geq k$ whose parent is of region $k'' < k$.

\begin{proposition}\label{prop:stack-active}
  For $w \in \Symmetric_\alpha(231)$, consider the stack processing of $w$ with stack $S$. For $1 \leq k \leq r+1$, the elements in $S$ after processing 
	the $k$-th $\alpha$-region
  are exactly the labels of the active nodes for region $k$ in $T(w)$.

  Furthermore, for a node $u$ in $T(w)$, the labels of its children are exactly the elements popped out by the label of $u$ in the stack processing of $w$.
\end{proposition}
\begin{proof}
  Let $E_k$ be the set of elements in $S$ after processing the $k$-th $\alpha$-region, and let $L_k$ be the set of labels of the active nodes of region $k$ in $T(w)$. We show that $E_k = L_k$. We proceed by induction on $k$ from $r+1$ to $1$. For $k = r+1$, the set $E_{r+1}$ is empty, and there are no active nodes for region $r+1$, so we have $E_{r+1} = L_{r+1}$. Suppose that $E_{k+1} = L_{k+1}$. We observe that, by Construction~\ref{constr:stack} and Lemma~\ref{lem:stack-prop}(i), we have
  \begin{equation} \label{eq:stack-active-1}
    E_k = \left( E_{k+1} \setminus R_k \right) \cup \{ w(i) \mid \indicator(i) = k \},
  \end{equation}
  where $R_k = \{ w(i) \mid \indicator(i) > k, w(i) < w(s_k) \}$.

  Now, by the definition of active nodes, we split $L_k$ into two parts, $L_k^{(1)}$ for nodes of region $k$, and $L_k^{(2)}$ for other nodes. It is clear that $L_k^{(1)} = \{ w(i) \mid \indicator(i) = k \}$. A node with label in $L_k^{(2)}$ must be in some region $k' > k$, and its parent in some other region $k'' < k < k+1$. Therefore, $L_k^{(2)} \subseteq L_{k+1}$. Conversely, let $u$ be a node with labels in $L_{k+1}$, and let $v$ be the parent of $u$ in region $k_v$. If $k_v < k$, then $u$ is also in $L_k^{(2)}$; otherwise, if $k_v = k$, then $u$ is not in $L_k^{(2)}$. We thus have
  \begin{equation} \label{eq:stack-active-2}
    L_k = (L_{k+1} \setminus R_k') \cup \{ w(i) \mid \indicator(i) = k \}.
  \end{equation}
  Here, $R_k'$ is the set of labels of nodes whose parents are of region $k$. 

  Now, by the construction of $T(w)$ in Construction~\ref{constr:theta}, labels in $R_k'$ must be in some region $k' > k$, and they are smaller than some element in region $k$ of $w$, therefore smaller than $w(s_k)$. We thus have $R_k' \subseteq R_k$. Conversely, suppose that $R_k \setminus R_k'$ is not empty, and let $w(i) \in R_k \setminus R_k'$ and $u$ the node in $T(w)$ with $w(i)$ as label. We have $\indicator(i) > k$ and $w(i) < w(s_k)$. Let $v$ be the parent of $u$ in $T(w)$, and $w(j)$ the label of $v$. As $w(i) \in R_k'$, we know that $v$ is in some region $k_v < k$. Suppose that $v'$ is the node with label $w(s_k)$. As $w(i) < w(s_k)$, by Proposition~\ref{prop:lac-reading}, $u$ precedes $v'$ in postorder. By the construction of $T(w)$, we know that the region of a node is always strictly smaller than that of its children, and weakly smaller than that of its siblings to the right. Therefore, the node $v$ of region $k_v < k$ must not be a descendant of $v'$ of region $k$. If $v'$ is a descendant of $v$, as $u$ precedes $v'$ in postorder, meaning that $v'$ must be a sibling of $u$ to the right, or a descendant of such a sibling, which is impossible because $u$ is in region $\indicator(i) > k$. Therefore, $v'$ and $v$ are not comparable in $T(w)$, and along with the fact that $u$ precedes $v'$ in postorder, $v$ also precedes $v'$ in postorder. Now take the rightmost child of $v$, say $u'$ with label $w(i')$, which also precedes $v'$ in postorder. We have $w(j) = w(i')+1$ and $w(s_k) > w(j)$ by Proposition~\ref{prop:lac-reading}. Furthermore, $v$ is in region $k_v < k$, while $v'$ is in region $k$ and $u'$ is in a region $k_{u'} \geq \indicator(i) > k$. We thus have an $(\alpha,231)$-pattern $w(j), w(s_k), w(i')$ in $w$, which is not possible. Therefore, $w(i)$ cannot exist, and we have $R_k' = R_k$.

  Comparing \eqref{eq:stack-active-1} and \eqref{eq:stack-active-2}, along with $R_k' = R_k$ and the induction hypothesis $E_{k+1} = L_{k+1}$, we have $E_k = L_k$, concluding the induction. Therefore, the first part of our claim holds for all $1 \leq k \leq r+1$.

  For the second part, let $w(i)$ be the label of $u$ and $w(j)$ an element popped out by $w(i)$ in the stack processing, and $v$ the node with $w(j)$ as label. Suppose that $u$ is of region $k_u$. By the first part of our claim, $v$ is an active node for region $k_u - 1$ but not for region $k_u$. Therefore, the parent of $v$ is of region $k_u$. By Proposition~\ref{prop:lac-reading}, the label of the parent of $v$ must be the first element in region $k_u$ larger than $w(j)$, which is $w(i)$ according to the popping step of region $k_u$. Thus, $u$ is the parent of $v$, and we have the second part of our claim.
\end{proof}

We now prove that the isomorphism $\varphi$ from $\Tamari_\alpha$ to
$\Tamari_{v_\alpha}$ is closely related to $\Theta$ defined in
\cite{ceballos20the}; see Section~\ref{sec:two_bijections}

\begin{theorem} \label{thm:phi-theta}
  Let $\alpha=(\alpha_{1},\alpha_{2},\ldots,\alpha_{r})$.  For $w \in \Symmetric_\alpha(231)$, we have
  \[
    \bigl(\nu_\alpha, \varphi(w)\bigr) = \conj'\Bigl(\flip(\nu_\alpha), \flip\bigl(\Theta(w)\bigr)\Bigr).
  \]
\end{theorem}
\begin{proof}
  By definition of $\conj'$ and Proposition~\ref{prop:drun-cont}, we only need to show that 
  \begin{displaymath}
	\drun\bigl(\nu_\alpha, \varphi(w)\bigr) = \cont\Bigl(\flip(\nu_\alpha), \flip\bigl(\Theta(w)\bigr)\Bigr),
  \end{displaymath}
	as $\drun$ is injective. For the pair $\bigl(\nu_\alpha, \varphi(w)\bigr)$, we observe that
  \[
    \nu_\alpha = [0, 0^{\alpha_1 - 1}, \alpha_1, 0^{\alpha_2 - 1}, \alpha_2, \ldots, 0^{\alpha_r - 1}, \alpha_r],
  \]
  with $0^k$ standing for $k$ entries of $0$. Suppose that $\varphi(w) = [f_0, f_1, \ldots, f_n]$. We have
  \begin{align}
    \begin{split}\label{eq:phi-theta-drun}    
      \drun\bigl(\nu_\alpha, \varphi(w)\bigr) &= (f_{s_r} + 1, 0^{\alpha_r}, f_{s_r-1} + 1, f_{s_r-2} + 1, \ldots, f_{s_{r-1}+1} + 1, 0^{\alpha_{r-1}}, \ldots, \\
      &\quad\quad\quad\quad\quad f_{s_1-1} + 1, \ldots, f_0 + 1, 0^{\alpha_1}, 0).
    \end{split}
  \end{align}

  Now for the pair $\bigl(\nu_\alpha, \Theta(w)\bigr)$, for $0 \leq i \leq 2n$, let $Q_i'$ the $(i+1)$-st lattice point $Q_i'$ on $\nu_\alpha$ \emph{in reverse order}. We define $d'_i$ to be the number of north steps $(0,1)$ we can take from $Q_i'$ without crossing to the other side of $\Theta(w)$. It is clear that $d'_i$ is also the reverse horizontal distance of the $(i+1)$-st lattice point of $\flip(\nu_\alpha)$ with respect to $\flip\bigl(\Theta(w)\bigr)$. By Proposition~\ref{prop:cont-path-pair},
  \begin{align*}
    \cont\Bigl(\flip(\nu_\alpha), \flip\bigl(\Theta(w)\bigr)\Bigr)_0 &= \bigl\lvert\{ \ell \mid 1 \leq \ell \leq 2n, d'_\ell = 0 \}\bigr\rvert; \\
    \cont\Bigl(\flip(\nu_\alpha), \flip\bigl(\Theta(w)\bigr)\Bigr)_i &= \bigl\lvert\{ \ell \mid i < \ell \leq 2n, d'_\ell = d'_i + 1, \forall i < m \leq \ell, d'_m > d'_i \}\bigr\rvert.
  \end{align*}

Consider the stack processing of $w$ with stack $S$. We now show that the
number of elements in the stack after $i$ steps of stack processing is $d_i'$.
We proceed by induction on the number of steps we have taken in the stack
processing. In the initial stage, $d'_0 = 0$ agrees with the empty stack. When
dealing with region $k$, we first perform the popping step. By the
construction of $\Theta(w)$, for the $i$-th element in region $k$, the number
of children of its correspondent node, which is also the number elements
popped out by $w(s_{k-1}+i)$ by Proposition~\ref{prop:stack-active}, is the
number of north steps of $\Theta(w)$ on abscissa $s_k-i+1$, which is exactly
$d'_{2n-2s_k+i-1} - d'_{2n-2s_k+i}$. Then for the pushing step, the stack size
increases by $1$ at each step, just as when we pass from $d'_{2n-s_k+i-1}$ to
$d'_{2n-s_k+i}$. We thus conclude the induction.

We now show that $\drun\bigl(\nu_\alpha, \varphi(w)\bigr) =
\cont\Bigl(\flip(\nu_\alpha), \flip\bigl(\Theta(w)\bigr)\Bigr)$. First, we
know that $d'_i$ is weakly decreasing for $i$ from $2(n-s_k)+1$ to
$2(n-s_k)+\alpha_k$ for all $1 \leq k \leq r$, and by definition,
$\cont\Bigl(\flip(\nu_\alpha), \flip\bigl(\Theta(w)\bigr)\Bigr)$ takes the
form
  \[
    (g_{s_r}, 0^{\alpha_r}, g_{s_r-1}, g_{s_r-2}, \ldots, g_{s_{r-1}}, 0^{\alpha_{r-1}}, \ldots, g_{s_1-1}, \ldots, g_0, 0^{\alpha_1}, 0).
  \]
Here, $(g_i)_{0 \leq i \leq n}$ is a sequence of positive integers. The last
$0$ is from the last point, because it does not have any lattice point after
it. In comparison to $\drun\bigl(\nu_\alpha, \varphi(w)\bigr)$, it is clear
that we only need to prove $g_\ell = f_\ell + 1$ for all $\ell$.

For $\ell=n$, according to Proposition~\ref{prop:stack-companion}, an element
$w(j)$ has $w(n)$ as its companion if, and only if the stack is empty after
its popping step, which is equivalent to $d'_j = 0$. Therefore, $g_n = f_n +
1$. For $\ell = 0$, it is clear that $g_0 = 1 = f_0 + 1$, since $\nu_\alpha$
starts with a north step.

For $0 < \ell < n$, we know that $f_\ell$ is the number of nodes with
$w(\ell)$ as companion, which is also the number of times we see $w(\ell+1)$
at the top of the stack during a popping step according to
Proposition~\ref{prop:stack-companion}. Suppose that $w(\ell+1)$ is the $i$-th
element in region $k$, thus $\ell + 1 = s_{k-1} + i$. We know that $w(\ell+1)$
is pushed into the stack at step $2(n-s_{k-1})-i+1$. Suppose that there are $p
= d'_{2(n-s_{k-1})-i}$ elements before $w(\ell+1)$ is pushed down, we have
$d'_{2(n-s_{k-1})-i+1} = p + 1$. Then $w(\ell+1)$ is popped out once $d'_j
\leq p$. When we see $w(\ell+1)$ on top of the stack, we must have $d'_j =
p+1$ before it is popped. This is exactly the definition of
$\cont(\flip(\nu_\alpha), \flip(\Theta_1(w)))_{2(n-s_{k-1})-i}$, which is also
$g_{s_{k-1} + i - 1}$. We thus know that $g_{s_{k-1} + i - 1}$ is the number
of times we see $w(\ell+1)$ on the top of the stack, the first time it is
pushed, the other times we have an element whose companion is $w(\ell)$. We
thus have $g_\ell = g_{s_{k-1}+i-1} = f_{\ell} + 1$. It follows that
$\drun(\nu_\alpha, \varphi(w)) = \cont(\flip(\nu_\alpha), \flip(\Theta(w)))$,
which concludes the proof.
\end{proof}

\begin{figure} 
\centering
\includegraphics[page=2,width=\textwidth]{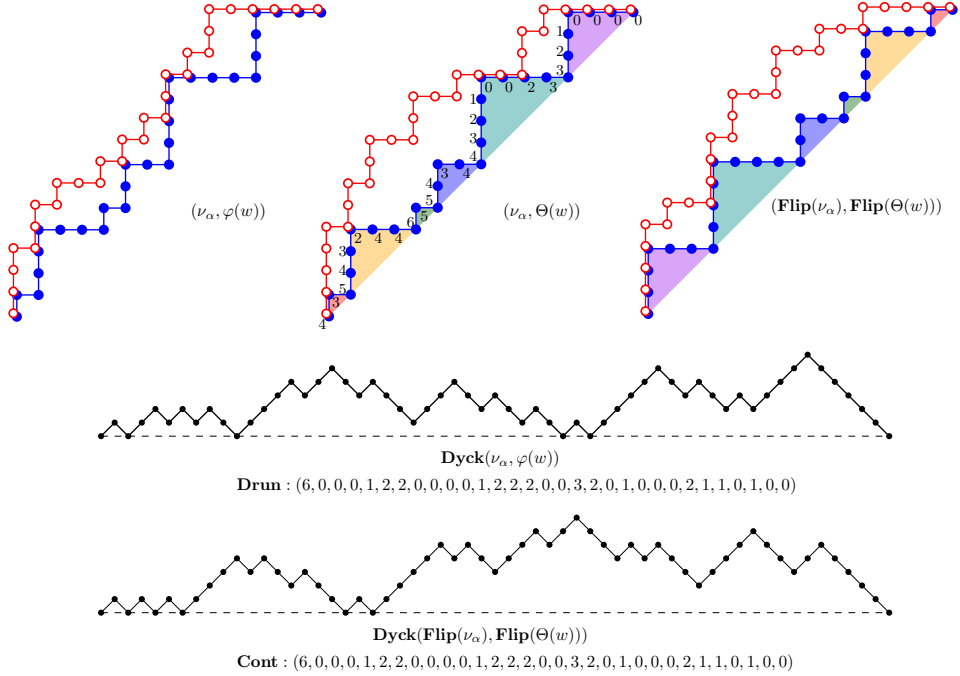}
\caption{An example of the transfer of statistics in the two ways of looking at parabolic objects.} \label{fig:conjbijproof}
\end{figure}

An example of the proof of Theorem~\ref{thm:phi-theta} can be seen in Figure~\ref{fig:conjbijproof}. We thus solve \cite{ceballos20the}*{Open Problem~2.23}.

\begin{corollary}\label{coro:phi-theta}
  The map $\flip \circ \Theta$ is an anti-isomorphism between $\Tamari_\alpha$ and $\Tamari_{\flip(\nu_\alpha)}$.
\end{corollary}
\begin{proof}
  This is a consequence of Theorem~\ref{thm:phi-theta}, and Propositions~\ref{coro:tamari-inv}~and~\ref{prop:perm-to-path-code}.
\end{proof}

\begin{remark}\label{rmk:typo-open-problem}
There is a typo in \cite{ceballos20the}*{Open Problem~2.23}. It should be
``lattice anti-isomorphism'' instead of ``lattice isomorphism'', as we can
also see in Figure~11 therein.
\end{remark}

\begin{bibdiv}\begin{biblist}
	\bibselect{literature_alpha}
\end{biblist}\end{bibdiv}

\end{document}